\numberwithin{equation}{subsection}
\newcommand{\sqsp}{\renewcommand{\baselinestretch}{1.15}\tiny\normalsize}
\newtheorem{theorem}[subsection]{Theorem}
\newtheorem{lemma}[subsection]{Lemma}
\newtheorem{proposition}[subsection]{Proposition}
\newtheorem{corollary}[subsection]{Corollary}
\theoremstyle{definition}
\newtheorem{definition}[subsection]{Definition}
\newtheorem{example}[subsection]{Example}
\newcommand{\bk}{\mathbf{k}}
\newcommand{\bR}{\mathbb{R}}
\newcommand{\rtwon}{\bR^{2n}}
\newcommand{\xtwo}{x_1 \otimes x_2}
\newcommand{\ytwo}{y_1 \otimes y_2}
\newcommand{\ztwo}{z_1 \otimes z_2}
\newcommand{\fg}{\mathfrak{g}}
\newcommand{\sltwo}{\mathfrak{sl}_2}
\newcommand{\muop}{\mu^{op}}
\newcommand{\mualpha}{\mu_\alpha}
\newcommand{\mubeta}{\mu_\beta}
\newcommand{\mun}{\mu^{(n)}}
\newcommand{\mualphabar}{\mu_{\alphabar}}
\newcommand{\muphi}{\mu_{\varphi^*}}
\newcommand{\bracebeta}{\{,\}_\beta}
\newcommand{\bracen}{\{,\}^{(n)}}
\newcommand{\bracephi}{\{,\}_{\varphi^*}}
\newcommand{\defn}{\buildrel \text{def} \over =}
\newcommand{\cinfty}{C^\infty}
\newcommand{\cinftym}{C^\infty(M)}
\newcommand{\cinftyn}{C^\infty(N)}
\newcommand{\hei}{\mathsf{H}}
\newcommand{\pthreeone}{\mathcal{P}_{3,1}(\zeta)}
\newcommand{\pthreeonezero}{\mathcal{P}_{3,1}(0)}
\newcommand{\pthreetwo}{\mathcal{P}_{3,2}}
\newcommand{\fphimatrix}{{\begin{pmatrix} f(\varphi^2(x)) & f(\varphi(x))\\ f(\varphi^3(x)) & f(\varphi^2(x))\end{pmatrix}}}
\newcommand{\alphahei}{{
\begin{pmatrix}
a_{11} & a_{12} & 0\\
a_{21} & a_{22} & 0\\
a_{31} & a_{32} & b
\end{pmatrix}}}
\newcommand{\alphaone}{{
\begin{pmatrix}
a_{11} & 0 & 0\\
0 & a_{22} & 0\\
a_{31} & a_{32} & a_{11}a_{22}
\end{pmatrix}
}}
\newcommand{\alphatwo}{{
\begin{pmatrix}
a_{11} & a_{12} & 0\\
0 & 0 & 0\\
a_{31} & a_{32} & 0
\end{pmatrix}
}}
\newcommand{\alphathree}{{
\begin{pmatrix}
0 & 0 & 0\\
a_{21} & a_{22} & 0\\
a_{31} & a_{32} & 0
\end{pmatrix}
}}
\newcommand{\alphafour}{{
\begin{pmatrix}
0 & 0 & 0\\
a_{21} & a_{22} & 0\\
a_{31} & a_{32} & 0
\end{pmatrix}
}}
\newcommand{\alphafive}{{
\begin{pmatrix}
a_{11} & 0 & 0\\
a_{21} & a_{11} & 0\\
a_{31} & a_{32} & a_{11}^2
\end{pmatrix}
}}
\newcommand{\diagonal}{{
\begin{pmatrix}
1/2 & 0 & \cdots & 0\\
0 & 1 & \cdots & 0\\
\vdots & \vdots & \ddots & \vdots\\
0 & 0 & \cdots & 1
\end{pmatrix}
}}
\newcommand{\alphamatrix}{{
\begin{pmatrix}
a_{11} & a_{12}/2 & \cdots & a_{1n}/2\\
2a_{21} & a_{22} & \cdots & a_{2n}\\
\vdots  & \vdots & \ddots & \vdots \\
2a_{n1} & a_{n2} & \cdots & a_{nn}
\end{pmatrix}
}}
\newcommand{\matrixa}{{
\begin{pmatrix}
a_{11} & a_{12}\\
a_{21} & a_{22}
\end{pmatrix}
}}
\newcommand{\matrixx}{{
\begin{pmatrix}
1 & 1\\
1 & 0
\end{pmatrix}
}}
\newcommand{\matrixxa}{{
\begin{pmatrix}
3 & 1/2\\
8 & 1
\end{pmatrix}
}}
\newcommand{\matrixxb}{{
\begin{pmatrix}
5 & 3/8\\
6 & 1/4
\end{pmatrix}
}}
\begin{document}

\title{Non-commutative Hom-Poisson algebras}
\author{Donald Yau}

\begin{abstract}
A Hom-type generalization of non-commutative Poisson algebras, called non-commutative Hom-Poisson algebras, are studied.  They are closed under twisting by suitable self-maps.  Hom-Poisson algebras, in which the Hom-associative product is commutative, are closed under tensor products.  Through (de)polarization Hom-Poisson algebras are equivalent to admissible Hom-Poisson algebras, each of which has only one binary operation.  Multiplicative admissible Hom-Poisson algebras are Hom-power associative.
\end{abstract}

\keywords{Non-commutative Hom-Poisson algebras, admissible Hom-Poisson algebras, Hom-power associative algebra.}

\subjclass[2000]{17A15,17B63}

\address{Department of Mathematics\\
    The Ohio State University at Newark\\
    1179 University Drive\\
    Newark, OH 43055, USA}
\email{dyau@math.ohio-state.edu}

\date{\today}
\maketitle

\sqsp

\section{Introduction}

A Poisson algebra $(A,\{,\},\mu)$ consists of a commutative associative algebra $(A,\mu)$ together with a Lie algebra structure $\{,\}$, satisfying the Leibniz identity:
\[
\{\mu(x,y),z\} = \mu(\{x,z\},y) + \mu(x,\{y,z\}).
\]
Poisson algebras are used in many fields in mathematics and physics.  In mathematics, Poisson algebras play a fundamental role in Poisson geometry \cite{vaisman}, quantum groups \cite{cp,dri87}, and deformation of commutative associative algebras \cite{ger2}.  In physics, Poisson algebras are a major part of deformation quantization \cite{kont}, Hamiltonian mechanics \cite{arnold}, and topological field theories \cite{ss}.  Poisson-like structures are also used in the study of vertex operator algebras \cite{fb}.

One way to generalize Poisson algebras is to omit the commutativity requirement.  Such a structure is called a non-commutative Poisson algebra.  When the associative product happens to be commutative, one has a Poisson algebra.  Some classification results of finite dimensional non-commutative Poisson algebras can be found in \cite{kubo}.  One can also think of a non-commutative Poisson algebra as a special case of a Leibniz pair \cite{fgv}.

The purpose of this paper is to study a twisted generalization of non-commutative Poisson algebras, called non-commutative Hom-Poisson algebras.  In a non-commutative Hom-Poisson algebra $A$, there is a linear self-map $\alpha$ (the twisting map) and two binary operations $\{,\}$ (the Hom-Lie bracket) and $\mu$ (the Hom-associative product).  The associativity, the Jacobi identity, and the Leibniz identity in a non-commutative Poisson algebra are replaced by their Hom-type (i.e., $\alpha$-twisted) analogues in a non-commutative Hom-Poisson algebra.  In particular, $(A,\mu,\alpha)$ is a Hom-associative algebra \cite{ms}, and $(A,\{,\},\alpha)$ is a Hom-Lie algebra \cite{hls}.  If the twisting map is the identity map, then a non-commutative Hom-Poisson algebra reduces to a non-commutative Poisson algebra.

Most of our results are about Hom-Poisson algebras, which are non-commutative Hom-Poisson algebras in which the Hom-associative products are commutative.  Hom-Poisson algebras were defined in \cite{ms3} by Makhlouf and Silvestrov.  It is shown in \cite{ms3} that Hom-Poisson algebras play the same role in the deformation of commutative Hom-associative algebras as Poisson algebras do in the deformation of commutative associative algebras.  Other Hom-type algebras are studied in \cite{ms,ms2} and \cite{yau0} - \cite{yau15} and the references therein.

The rest of this paper is organized as follows.  In section \ref{sec:basic} non-commutative Hom-Poisson algebras are defined.  It is shown that, just like Poisson algebras, Hom-Poisson algebras are closed under tensor products in a non-trivial way (Theorem \ref{thm:tensor}).  It should be noted that Hom-Lie algebras are not closed under tensor products in any non-trivial way.  In section \ref{sec:twist} it is shown that non-commutative Hom-Poisson algebras are closed under suitable twistings by weak morphisms (Theorem \ref{thm:twist}).  This is a unique feature for Hom-type algebras, as non-commutative Poisson algebras are not closed under such twistings.  Using a special case of this result, several classes of (non-commutative) Hom-Poisson algebras are constructed.

In section \ref{sec:admissible} it is shown that Hom-Poisson algebras are equivalent to admissible Hom-Poisson algebras, each of which has one twisting map and only one binary operation.  The correspondence between Hom-Poisson algebras and admissible Hom-Poisson algebras is the Hom-version of the correspondence between Poisson algebras and admissible Poisson algebras \cite{gr,mr}.  In section \ref{sec:power} it is shown that multiplicative admissible Hom-Poisson algebras are Hom-power associative.

\section{Basic properties of non-commutative Hom-Poisson algebras}
\label{sec:basic}

In this section, we introduce (non-commutative) Hom-Poisson algebras and study tensor products of Hom-Poisson algebras.

\subsection{Notations}

We work over a fixed field $\bk$ of characteristic $0$.  If $V$ is a $\bk$-module and $\mu \colon V^{\otimes 2} \to V$ is a bilinear map, then $\muop \colon V^{\otimes 2} \to V$ denotes the opposite map, i.e., $\muop = \mu\tau$, where $\tau \colon V^{\otimes 2} \to V^{\otimes 2}$ interchanges the two variables.  For a linear self-map $\alpha \colon V \to V$, denote by $\alpha^n$ the $n$-fold composition of $n$ copies of $\alpha$, with $\alpha^0 \equiv Id$.

Let us begin with the basic definitions regarding Hom-algebras.

\begin{definition}
\label{def:homalg}
\begin{enumerate}
\item
By a \textbf{Hom-module} we mean a pair $(A,\alpha)$ in which $A$ is a $\bk$-module and $\alpha \colon A \to A$ is a linear map, called the twisting map.
\item
By a \textbf{Hom-algebra} we mean a triple $(A,\mu,\alpha)$ in which $(A,\alpha)$ is a Hom-module and $\mu \colon A^{\otimes 2} \to A$ is a bilinear map, called the multiplication.  A Hom-algebra $(A,\mu,\alpha)$ and the corresponding Hom-module $(A,\alpha)$ are often abbreviated to $A$.
\item
A Hom-algebra $(A,\mu,\alpha)$ is said to be \textbf{multiplicative} if $\alpha\mu = \mu \alpha^{\otimes 2}$.  It is called \textbf{commutative} if $\mu = \muop$.
\end{enumerate}
\end{definition}

Unless otherwise specified, an algebra $(A,\mu)$ with $\mu \colon A^{\otimes 2} \to A$ is also regarded as a Hom-algebra $(A,\mu,Id)$ with identity twisting map.  Given a Hom-algebra $(A,\mu,\alpha)$, we often use the abbreviation
\[
\mu(x,y) = xy
\]
for $x,y \in A$.

Let us now recall the Hom-type generalizations of associative and Lie algebras from \cite{ms}.

\begin{definition}
\label{def:homass}
Let $(A,\mu,\alpha)$ be a Hom-algebra.
\begin{enumerate}
\item
The \textbf{Hom-associator} $as_A \colon A^{\otimes 3} \to A$ is defined as
\begin{equation}
\label{homassociator}
as_A = \mu (\mu \otimes \alpha - \alpha \otimes \mu).
\end{equation}
\item
The Hom-algebra $A$ is called a \textbf{Hom-associative algebra} if it satisfies the \textbf{Hom-associative identity}
\begin{equation}
\label{homassociativity}
as_A = 0.
\end{equation}
\item
The \textbf{Hom-Jacobian} $J_A \colon A^{\otimes 3} \to A$ is defined as
\begin{equation}
\label{homjacobian}
J_A = \mu (\mu \otimes \alpha)(Id + \sigma + \sigma^2),
\end{equation}
where $\sigma(x \otimes y \otimes z) = z \otimes x \otimes y$.  The sum $(Id + \sigma + \sigma^2)$ is called a \textbf{cyclic sum}.
\item
The Hom-algebra $A$ is called a \textbf{Hom-Lie algebra} if it satisfies the \textbf{Hom-Jacobi identity}
\begin{equation}
\label{homjacobi}
J_A = 0.
\end{equation}
\end{enumerate}
\end{definition}

When the twisting map $\alpha$ is the identity map, the above definitions reduce to the usual definitions of the associator, an associative algebra, the Jacobian, and a Lie algebra.  Examples of Hom-associative and Hom-Lie algebras can be found in \cite{ms,yau0,yau1}.  In terms of elements $x,y,z \in A$, the Hom-associator and the Hom-Jacobian are
\[
\begin{split}
as_A(x,y,z) &= (xy)\alpha(z) - \alpha(x)(yz),\\
J_A(x,y,z) &= (xy)\alpha(z) + (zx)\alpha(y) + (yz)\alpha(x).
\end{split}
\]

Let us recall the definition of a non-commutative Poisson algebra \cite{kubo}.

\begin{definition}
\label{def:noncommpoisson}
A \textbf{non-commutative Poisson algebra} $(A,\{,\},\mu)$ consists of
\begin{enumerate}
\item
a Lie algebra $(A,\{,\})$ and
\item
an associative algebra $(A,\mu)$
\end{enumerate}
such that the Leibniz identity
\[
\{x,yz\} = \{x,y\}z + y\{x,z\}
\]
is satisfied for all $x,y,z \in A$.  A \textbf{Poisson algebra} is a non-commutative Poisson algebra $(A,\{,\},\mu)$ in which $\mu$ is commutative.  A \textbf{morphism} of non-commutative Poisson algebras is a linear map that is a morphism of the underlying Lie algebras and associative algebras.
\end{definition}

In a non-commutative Poisson algebra $(A,\{,\},\mu)$, the Lie bracket $\{,\}$ is called the Poisson bracket, and $\mu$ is called the associative product.  The Leibniz identity says that $\{x,-\}$ is a derivation with respect to the associative product.  It can be rewritten in element-free form as
\[
\{,\}(Id \otimes \mu) = \mu\left(\{,\} \otimes Id + (Id \otimes \{,\})(1~2)\right),
\]
where $(1~2)(x \otimes y \otimes z) = y \otimes x \otimes z$.

Hom-Poisson algebras are first introduced in \cite{ms2} by Makhlouf and Silvestrov.  We now define the Hom-type generalization of a non-commutative Poisson algebra.

\begin{definition}
\label{def:hompoisson}
A \textbf{non-commutative Hom-Poisson algebra} $(A,\{,\},\mu,\alpha)$ consists of
\begin{enumerate}
\item
a Hom-Lie algebra $(A,\{,\},\alpha)$ and
\item
a Hom-associative algebra $(A,\mu,\alpha)$
\end{enumerate}
such that the \textbf{Hom-Leibniz identity}
\begin{equation}
\label{homleibniz}
\{,\}(\alpha \otimes \mu) = \mu\left(\{,\} \otimes \alpha + (\alpha \otimes \{,\})(1 ~ 2)\right)
\end{equation}
is satisfied.  A \textbf{Hom-Poisson algebra} is a non-commutative Hom-Poisson algebra $(A,\{,\},\mu,\alpha)$ in which $\mu$ is commutative \cite{ms2}.
\end{definition}

In a non-commutative Hom-Poisson algebra $(A,\{,\},\mu,\alpha)$, the operations $\{,\}$ and $\mu$ are called the \textbf{Hom-Poisson bracket} and the \textbf{Hom-associative product}, respectively.  In terms of elements $x,y,z \in A$, the Hom-Leibniz identity says
\begin{equation}
\label{homleibniz'}
\{\alpha(x),yz\} = \{x,y\}\alpha(z) + \alpha(y)\{x,z\},
\end{equation}
where as usual $\mu(x,y)$ is abbreviated to $xy$.  By the anti-symmetry of the Hom-Poisson bracket $\{,\}$, the Hom-Leibniz identity is equivalent to
\[
\{xy,\alpha(z)\} = \{x,z\}\alpha(y) + \alpha(x)\{y,z\}.
\]
A (non-commutative) Poisson algebra is exactly a multiplicative (non-commutative) Hom-Poisson algebra with identity twisting map.

Let us now provide some basic properties of non-commutative Hom-Poisson algebras.  Every associative algebra $(A,\mu)$ has a non-commutative Poisson algebra structure in which the Poisson bracket is the commutator bracket.  The following result is the Hom-type analogue of this observation.

\begin{proposition}
\label{prop:homasspoisson}
Let $(A,\mu,\alpha)$ be a Hom-associative algebra.  Then
\[
A^- = (A,[,] = \mu - \muop,\mu,\alpha)
\]
is a non-commutative Hom-Poisson algebra.
\end{proposition}

\begin{proof}
It is proved in \cite{ms} (Proposition 1.6) that $(A,[,],\alpha)$ is a Hom-Lie algebra.  Indeed, the commutator bracket $[,]$ is obviously anti-symmetric.  One can write out all twelve terms (in terms of $\mu$) in the Hom-Jacobian of $A^-$ and observe that their sum is zero.  To check the Hom-Leibniz identity \eqref{homleibniz'} for $A^-$, we compute as follows:
\[
\begin{split}
[x,y]\alpha(z) & +  \alpha(y)[x,z] - [\alpha(x),yz]\\
&= (xy)\alpha(z) - (yx)\alpha(z) + \alpha(y)(xz) - \alpha(y)(zx) - \alpha(x)(yz) + (yz)\alpha(x)\\
&= as_A(x,y,z) - as_A(y,x,z) + as_A(y,z,x).
\end{split}
\]
Since $as_A = 0$, we conclude that $A^-$ satisfies the Hom-Leibniz identity.
\end{proof}


Next we study tensor products of Hom-Poisson algebras.  For motivation, recall that Lie algebras are not closed under tensor products.  The same is true for Hom-Lie algebras.  On the other hand, Poisson algebras are closed under tensor products.  In the rest of this section, we show that Hom-Poisson algebras are also closed under tensor products.

We need the following preliminary result.

\begin{lemma}
\label{lem:homass}
Let $(A,\mu,\alpha)$ be a commutative Hom-associative algebra. Then the expression $(xy)\alpha(z)$ is invariant under permutations of $x,y,z \in A$.  In other words, we have
\[
\mu(\mu \otimes \alpha) = \mu(\mu \otimes \alpha)\pi
\]
for all the permutations $\pi$ on three letters.
\end{lemma}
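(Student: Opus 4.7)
The plan is to exploit the fact that the symmetric group $S_3$ is generated by two transpositions, so it suffices to check invariance of $(xy)\alpha(z)$ under the swaps $x \leftrightarrow y$ and $y \leftrightarrow z$. Once these two invariances are in hand, the full statement follows by composing them to cover the whole group (the other three nontrivial permutations $(1\,3)$, $(1\,2\,3)$, $(1\,3\,2)$ are obtained as products of these two transpositions).

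The first invariance is immediate: commutativity $\mu = \muop$ gives $(xy)\alpha(z) = (yx)\alpha(z)$, so we have the transposition of the first two arguments for free.

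For the second invariance, I would chain together Hom-associativity and commutativity in three steps: starting from $(xy)\alpha(z)$, apply the Hom-associative identity $as_A = 0$ to move the $\alpha$ inward, giving $\alpha(x)(yz)$; then apply commutativity inside the right factor to obtain $\alpha(x)(zy)$; and finally apply Hom-associativity in reverse to land on $(xz)\alpha(y)$. This is the swap of the last two arguments.

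The argument is essentially routine once the generating set for $S_3$ is chosen — there is no real obstacle, and the only thing to keep track of is that Hom-associativity has to be used in both directions (once to bring $\alpha$ from the outside to the first slot, once to send it back out to the third slot) with a single commutativity step sandwiched in between. Collecting the two invariances and writing $\mu(\mu \otimes \alpha) = \mu(\mu \otimes \alpha)\pi$ for the three generating $\pi$'s, and hence for all $\pi \in S_3$, completes the proof.
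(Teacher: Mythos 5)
Your proposal is correct and follows essentially the same route as the paper: reduce to the two generating transpositions of $S_3$, get $(1\,2)$-invariance immediately from commutativity, and get $(2\,3)$-invariance by combining commutativity with the Hom-associative identity. The paper's chain for the second transposition is $(xy)\alpha(z)=(yx)\alpha(z)=\alpha(y)(xz)=(xz)\alpha(y)$, which differs from yours only in the order in which commutativity and Hom-associativity are interleaved; both are valid.
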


\begin{proof}
Pick $x,y,z \in A$.  Then
\[
\begin{split}
(xy)\alpha(z) &= (yx)\alpha(z)\\
&= \alpha(y)(xz)\\
&= (xz)\alpha(y).
\end{split}
\]
So the expression $(xy)\alpha(z)$ is invariant under the transpositions $(1~2)$ and $(2~3)$.  The proof is complete because these two transpositions generate the symmetric group on three letters.
\end{proof}

\begin{definition}
\label{def:poissonassociator}
Let $(A,\{,\},\mu,\alpha)$ be a quadruple in which $(A,\alpha)$ is a Hom-module and $\{,\}, \mu \colon A^{\otimes 2} \to A$ are bilinear operations.  Define its \textbf{Hom-associator} $as_A$ and \textbf{Hom-Jacobian} $J_A$ using $\mu$ and $\{,\}$, respectively, i.e.,
\[
\begin{split}
as_A &= \mu(\mu \otimes \alpha - \alpha \otimes \mu),\\
J_A &= \{,\}(\{,\} \otimes \alpha)(Id + \sigma + \sigma^2),
\end{split}
\]
where $\sigma(x \otimes y \otimes z) = z \otimes x \otimes y$.
\end{definition}

Now we are ready to prove that Hom-Poisson algebras are closed under tensor products.

\begin{theorem}
\label{thm:tensor}
Let $(A_i,\{,\}_i,\mu_i,\alpha_i)$ be Hom-Poisson algebras for $i = 1,2$, and let $A = A_1 \otimes A_2$.  Define the operations $\alpha \colon A \to A$ and $\mu, \{,\} \colon A^{\otimes 2} \to A$ by:
\[
\begin{split}
\alpha &= \alpha_1 \otimes \alpha_2,\\
\mu(\xtwo,\ytwo) &= \mu_1(x_1,y_1) \otimes \mu_2(x_2,y_2),\\
\{\xtwo,\ytwo\} &= \{x_1,y_1\}_1 \otimes \mu_2(x_2,y_2) + \mu_1(x_1,y_1) \otimes \{x_2,y_2\}_2
\end{split}
\]
for $x_i,y_i \in A_i$.  Then $(A,\{,\},\mu,\alpha)$ is a Hom-Poisson algebra.
\end{theorem}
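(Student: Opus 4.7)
The plan is to verify separately that $(A,\mu,\alpha)$ is a commutative Hom-associative algebra, that $(A,\{,\},\alpha)$ is a Hom-Lie algebra, and that the Hom-Leibniz identity holds. The first of these is the standard tensor product of two commutative Hom-associative algebras: commutativity of $\mu$ and the Hom-associator condition follow factor-by-factor from the corresponding properties of $\mu_1$ and $\mu_2$. Anti-symmetry of $\{,\}$ follows immediately from anti-symmetry of $\{,\}_i$ together with commutativity of $\mu_i$, since the two summands defining $\{x_1 \otimes x_2, y_1 \otimes y_2\}$ swap sign under interchange of $x \leftrightarrow y$.

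For the Hom-Leibniz identity on $A$, I would plug in pure tensors $x = x_1 \otimes x_2$, $y = y_1 \otimes y_2$, $z = z_1 \otimes z_2$ and expand $\{\alpha(x), yz\}$ using the Hom-Leibniz identity in each factor $A_i$. The result is four tensor summands. Expanding the right hand side $\{x,y\}\alpha(z) + \alpha(y)\{x,z\}$ likewise produces four tensor summands. Pairing the summands of the two sides, each matched pair has one factor that agrees on the nose while the other factor has the form $\alpha_i(u)(vw)$ on one side and $(uv)\alpha_i(w)$ or $\alpha_i(v)(uw)$ on the other, for some permutation $(u,v,w)$ of $(x_i,y_i,z_i)$. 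By Lemma \ref{lem:homass} applied in the appropriate commutative Hom-associative factor, these are equal, so the four pairs match and Hom-Leibniz holds on pure tensors, hence on all of $A$ by bilinearity.

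The main obstacle is the Hom-Jacobi identity for $\{,\}$ on $A$. I would compute $\{\{x,y\},\alpha(z)\}$ for pure tensors by the same expansion: the inner bracket has two summands, the outer bracket then produces four. Taking the cyclic sum over $(x,y,z)$ gives twelve terms, which I would organize into four groups of three cyclic permutations. Two of the groups are the ``pure'' ones in which a triple bracket occurs in one tensor factor and an expression of the form $(u_iv_i)\alpha_i(w_i)$ occurs in the other. By Lemma \ref{lem:homass}, the second factor in each such group is invariant under cyclic permutation of $(x_i,y_i,z_i)$, so it factors out of the cyclic sum, which then becomes the Hom-Jacobian of $A_i$ tensored with that invariant expression; this vanishes because $(A_i, \{,\}_i,\alpha_i)$ is a Hom-Lie algebra. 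The remaining two groups are ``mixed,'' involving a single bracket in each tensor factor. Here I would rewrite each bracketed-product term $\{x_iy_i,\alpha_i(z_i)\}_i$ using the alternative form of the Hom-Leibniz identity displayed after \eqref{homleibniz'}, expanding into two summands and converting everything into expressions of the shape $\{x_i,z_i\}_i\alpha_i(y_i)$ etc. The resulting twelve elementary tensors come in six pairs that cancel by anti-symmetry of $\{,\}_i$ and by matching, via Lemma \ref{lem:homass}, of the accompanying associative expressions.

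If every piece of the cyclic sum thus vanishes, the Hom-Jacobian of $A$ is zero, completing the verification that $(A,\{,\},\mu,\alpha)$ is a Hom-Poisson algebra. Throughout, the decisive tool is Lemma \ref{lem:homass}, which allows one to permute the three arguments in expressions $(u_iv_i)\alpha_i(w_i)$ freely inside the commutative Hom-associative factors; without it the bookkeeping of the twelve cyclic-sum terms would not close.
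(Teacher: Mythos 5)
Your proposal is correct and follows essentially the same route as the paper: factorwise verification of commutativity, Hom-associativity, and anti-symmetry; the Hom-Leibniz identity by expanding both sides into four matching tensor summands via Lemma \ref{lem:homass}; and the Hom-Jacobi identity by splitting the twelve cyclic-sum terms into two ``pure'' groups (killed by Lemma \ref{lem:homass} plus the Hom-Jacobi identity in each factor) and two ``mixed'' groups (expanded by Hom-Leibniz into twelve elementary tensors cancelling in six pairs). One small correction: in that final pairwise cancellation the accompanying factors have the form $\{u_i,v_i\}_i\alpha_i(w_i)$ versus $\alpha_i(w_i)\{u_i,v_i\}_i$, so what is needed there is just the commutativity of $\mu_i$ together with the anti-symmetry of $\{,\}_i$, not Lemma \ref{lem:homass} (which concerns triple products $(uv)\alpha(w)$ of the associative product alone).
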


\begin{proof}
That $(A,\mu,\alpha)$ is a commutative Hom-associative algebra follows from the commutativity and Hom-associativity of both $\mu_i$.  Also, the commutativity of the $\mu_i$ and the anti-symmetry of the $\{,\}_i$ imply the anti-symmetry of $\{,\}$.  It remains to prove the Hom-Jacobi identity and the Hom-Leibniz identity in $A$.

To simplify the typography, we abbreviate $\mu_1$, $\mu_2$, and $\mu$ using juxtaposition and drop the subscripts in $\{,\}_i$ and $\alpha_i$. Pick $x = \xtwo$, $y = \ytwo$, and $z = \ztwo \in A$.  Then
\[
\begin{split}
\{\{x,y\}, \alpha(z)\}
&= \{\{x_1,y_1\} \otimes x_2y_2, \alpha(z_1) \otimes \alpha(z_2)\} + \{x_1y_1 \otimes \{x_2,y_2\}, \alpha(z_1) \otimes \alpha(z_2)\}\\
&= \underbrace{\{\{x_1,y_1\},\alpha(z_1)\} \otimes (x_2y_2)\alpha(z_2)}_{a}
+ \underbrace{\{x_1,y_1\}\alpha(z_1) \otimes \{x_2y_2, \alpha(z_2)\}}_{b}\\
&\relphantom{} + \underbrace{\{x_1y_1,\alpha(z_1)\} \otimes \{x_2,y_2\}\alpha(z_2)}_{c} + \underbrace{(x_1y_1)\alpha(z_1) \otimes \{\{x_2,y_2\},\alpha(z_2)\}}_{d}
\end{split}
\]
The cyclic sum over $x,y,z$ of the term $a$ is $0$ by Lemma \ref{lem:homass} and the Hom-Jacobi identity in $A_1$.  Likewise, the cyclic sum over $x,y,z$ of the term $d$ is $0$ by Lemma \ref{lem:homass} and the Hom-Jacobi identity in $A_2$.  It follows that the Hom-Jacobian $J_A(x,y,z)$ consists of only the cyclic sum over $x,y,z$ of the terms $b$ and $c$.  Using the Hom-Leibniz identity in the $A_i$, the terms $b$ and $c$ become
\[
\begin{split}
b &= \{x_1,y_1\}\alpha(z_1) \otimes \{x_2y_2, \alpha(z_2)\}\\
&= \{x_1,y_1\}\alpha(z_1) \otimes \alpha(x_2)\{y_2,z_2\} + \{x_1,y_1\}\alpha(z_1) \otimes \{x_2,z_2\}\alpha(y_2),\\
c &= \{x_1y_1,\alpha(z_1)\} \otimes \{x_2,y_2\}\alpha(z_2)\\
&= \alpha(x_1)\{y_1,z_1\} \otimes \{x_2,y_2\}\alpha(z_2) + \{x_1,z_1\}\alpha(y_1) \otimes \{x_2,y_2\}\alpha(z_2).
\end{split}
\]
Therefore, the Hom-Jacobian of $A$ applied to $x \otimes y \otimes z$ is:
\[
\begin{split}
J_A(x,y,z) &=
\underbrace{\{x_1,y_1\}\alpha(z_1) \otimes \alpha(x_2)\{y_2,z_2\}}_{b_1}
 + \underbrace{\{x_1,y_1\}\alpha(z_1) \otimes \{x_2,z_2\}\alpha(y_2)}_{b_2}\\
& \relphantom{} + \underbrace{\alpha(x_1)\{y_1,z_1\} \otimes \{x_2,y_2\}\alpha(z_2)}_{c_1}
+ \underbrace{\{x_1,z_1\}\alpha(y_1) \otimes \{x_2,y_2\}\alpha(z_2)}_{c_2}\\
& \relphantom{} + \underbrace{\{z_1,x_1\}\alpha(y_1) \otimes \alpha(z_2)\{x_2,y_2\}}_{b_3}
+ \underbrace{\{z_1,x_1\}\alpha(y_1) \otimes \{z_2,y_2\}\alpha(x_2)}_{b_4}\\
& \relphantom{} + \underbrace{\alpha(z_1)\{x_1,y_1\} \otimes \{z_2,x_2\}\alpha(y_2)}_{c_3}
+ \underbrace{\{z_1,y_1\}\alpha(x_1) \otimes \{z_2,x_2\}\alpha(y_2)}_{c_4}\\
& \relphantom{} + \underbrace{\{y_1,z_1\}\alpha(x_1) \otimes \alpha(y_2)\{z_2,x_2\}}_{b_5}
+ \underbrace{\{y_1,z_1\}\alpha(x_1) \otimes \{y_2,x_2\}\alpha(z_2)}_{b_6}\\
& \relphantom{} + \underbrace{\alpha(y_1)\{z_1,x_1\} \otimes \{y_2,z_2\}\alpha(x_2)}_{c_5}
+ \underbrace{\{y_1,x_1\}\alpha(z_1) \otimes \{y_2,z_2\}\alpha(x_2)}_{c_6}.
\end{split}
\]
Using the anti-symmetry of $\{,\}_i$ and the commutativity of $\mu_i$, observe that the following sums are $0$: $b_1 + c_6$, $b_2 + c_3$, $b_3 + c_2$, $b_4 + c_5$, $b_5 + c_4$, and $b_6 + c_1$.  This shows that $(A,\{,\},\alpha)$ satisfies the Hom-Jacobi identity $J_A = 0$.

Finally, we check the Hom-Leibniz identity in $A$.  Using the Hom-associativity and the Hom-Leibniz identity in the $A_i$ and Lemma \ref{lem:homass}, we compute as follows:
\[
\begin{split}
\{xy,\alpha(z)\}
&= \{x_1y_1 \otimes x_2y_2, \alpha(z_1) \otimes \alpha(z_2)\}\\
&= \{x_1y_1,\alpha(z_1)\} \otimes (x_2y_2)\alpha(z_2) + (x_1y_1)\alpha(z_1) \otimes \{x_2y_2, \alpha(z_2)\}\\
&= \alpha(x_1)\{y_1,z_1\} \otimes \alpha(x_2)(y_2z_2) + \{x_1,z_1\}\alpha(y_1) \otimes (x_2z_2)\alpha(y_2)\\
&\relphantom{} + \alpha(x_1)(y_1z_1) \otimes \alpha(x_2)\{y_2,z_2\} + (x_1z_1)\alpha(y_1) \otimes \{x_2,z_2\}\alpha(y_2)\\
&= (\alpha(x_1) \otimes \alpha(x_2))\left(\{y_1,z_1\} \otimes y_2z_2 + y_1z_1 \otimes \{y_2,z_2\}\right)\\
&\relphantom{} + \left(\{x_1,z_1\} \otimes x_2z_2 + x_1z_1 \otimes \{x_2,z_2\}\right)(\alpha(y_1) \otimes \alpha(y_2))\\
&= \alpha(x)\{y,z\} + \{x,z\}\alpha(y).
\end{split}
\]
This shows that $A$ satisfies the Hom-Leibniz identity.
\end{proof}

Setting $\alpha_i = Id_{A_i}$ in Theorem \ref{thm:tensor}, we recover the following well-known result about Poisson algebras.

\begin{corollary}
\label{cor:tensor}
Let $(A_i,\{,\}_i,\mu_i)$ be Poisson algebras for $i = 1,2$, and let $A = A_1 \otimes A_2$.  Define the operations $\mu, \{,\} \colon A^{\otimes 2} \to A$ by:
\[
\begin{split}
\mu(\xtwo,\ytwo) &= \mu_1(x_1,y_1) \otimes \mu_2(x_2,y_2),\\
\{\xtwo,\ytwo\} &= \{x_1,y_1\}_1 \otimes \mu_2(x_2,y_2) + \mu_1(x_1,y_1) \otimes \{x_2,y_2\}_2
\end{split}
\]
for $x_i,y_i \in A_i$.  Then $(A,\{,\},\mu)$ is a Poisson algebra.
\end{corollary}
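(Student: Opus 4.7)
The plan is to derive this as an immediate specialization of Theorem~\ref{thm:tensor}, using the remark following Definition~\ref{def:hompoisson} that a Poisson algebra is precisely a multiplicative Hom-Poisson algebra whose twisting map is the identity.

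First I would regard each Poisson algebra $(A_i,\{,\}_i,\mu_i)$ as a Hom-Poisson algebra $(A_i,\{,\}_i,\mu_i,Id_{A_i})$ with identity twisting map. Applying Theorem~\ref{thm:tensor} to this pair produces a Hom-Poisson structure on $A = A_1 \otimes A_2$ whose twisting map is $\alpha = Id_{A_1} \otimes Id_{A_2} = Id_A$. Under this identification the operations $\mu$ and $\{,\}$ constructed in the theorem make no reference to the $\alpha_i$, so their defining formulas coincide verbatim with those in the corollary.

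Since $\alpha = Id_A$, the Hom-associator reduces to the ordinary associator, the Hom-Jacobian to the ordinary Jacobian, and the Hom-Leibniz identity to the ordinary Leibniz identity. Hence the fact that $(A,\{,\},\mu,Id_A)$ is a Hom-Poisson algebra is exactly the statement that $(A,\{,\},\mu)$ is a Poisson algebra.

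There is no genuine obstacle in the argument; the only thing to notice is that every Hom-type axiom collapses to its classical counterpart when $\alpha$ is the identity, which is immediate from the definitions in Section~\ref{sec:basic}.
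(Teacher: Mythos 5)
Your proposal is correct and follows exactly the paper's route: the corollary is obtained by setting $\alpha_i = Id_{A_i}$ in Theorem~\ref{thm:tensor} and observing that all Hom-type axioms collapse to their classical counterparts. Nothing further is needed.
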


\section{Twistings of non-commutative Hom-Poisson algebras}
\label{sec:twist}

In this section, we first observe that non-commutative Hom-Poisson algebras are closed under twisting by suitable self-maps.  A special case of this observation is that non-commutative Poisson algebras give rise to multiplicative non-commutative Hom-Poisson algebras via twisting by self-morphisms (Corollary \ref{cor2:twist}).  To study whether these twistings of non-commutative Poisson algebras actually give rise to non-commutative Poisson algebras, we introduce the concept of \emph{rigidity} in Definition \ref{def:deform}.  Twistings and rigidity of some classes of Poisson algebras are then studied.

\begin{definition}
\label{def:morphism}
Let $(A,\{,\},\mu,\alpha)$ be a quadruple in which $(A,\alpha)$ is a Hom-module and $\{,\}, \mu \colon A^{\otimes 2} \to A$ are bilinear operations.
\begin{enumerate}
\item
$A$ is \textbf{multiplicative} if
\[
\alpha\{,\} = \{,\} \alpha^{\otimes 2} \quad\text{and}\quad \alpha\mu = \mu\alpha^{\otimes 2}.
\]
\item
Let $(B,\{,\}_B,\mu_B,\alpha_B)$ be another such quadruple. A \textbf{weak morphism} $f \colon A \to B$ is a linear map such that
\[
f\{,\} = \{,\}_B f^{\otimes 2} \quad\text{and}\quad
f\mu = \mu_B f^{\otimes 2}.
\]
A \textbf{morphism} $f \colon A \to B$ is a weak morphism such that $f\alpha = \alpha_Bf$.
\end{enumerate}
\end{definition}

Note that a quadruple $(A,\{,\},\mu,\alpha)$ is multiplicative if and only if the twisting map $\alpha \colon A \to A$ is a morphism.

The following result says that (non-commutative) Hom-Poisson algebras are closed under twisting by self-weak morphisms.

\begin{theorem}
\label{thm:twist}
Let $(A,\{,\},\mu,\alpha)$ be a (non-commutative) Hom-Poisson algebra and $\beta \colon A \to A$ be a weak morphism.  Then
\[
A_\beta = (A,\{,\}_\beta = \beta\{,\},\mubeta = \beta\mu,\beta\alpha)
\]
is also a (non-commutative) Hom-Poisson algebra.  Moreover, if $A$ is multiplicative and $\beta$ is a morphism, then $A_\beta$ is a multiplicative (non-commutative) Hom-Poisson algebra.
\end{theorem}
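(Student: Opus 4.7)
The plan is to verify, in turn, the Hom-associativity of $\mubeta$, the Hom-Jacobi identity for $\bracebeta$, and the Hom-Leibniz identity tying the two together, using the same uniform device for all three: apply $\beta^2$ to the corresponding identity in $A$ and then move the resulting $\beta$'s inward using the weak-morphism hypotheses $\beta\mu = \mu\beta^{\otimes 2}$ and $\beta\{,\} = \{,\}\beta^{\otimes 2}$ to recognize the outcome as the desired identity for $A_\beta$. Anti-symmetry of $\bracebeta$ is inherited immediately from that of $\{,\}$, and the multiplicativity clause reduces to a short operator-level calculation.

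To carry this out for Hom-associativity: the Hom-associator of $A_\beta$ evaluated at $(x,y,z)$ equals $\mubeta(\mubeta(x,y),\beta\alpha(z)) - \mubeta(\beta\alpha(x),\mubeta(y,z))$, and two applications of $\beta\mu = \mu\beta^{\otimes 2}$ on each summand rewrite this as $\mu(\mu(\beta^2 x,\beta^2 y),\beta^2\alpha(z)) - \mu(\beta^2\alpha(x),\mu(\beta^2 y,\beta^2 z))$. This is exactly $\beta^2$ applied to $as_A(x,y,z)$, and therefore vanishes by Hom-associativity in $A$. The key point, and the reason a mere weak morphism suffices, is that $\alpha$ stays on the same tensor factor throughout, so one obtains $\beta^2\alpha(z)$ rather than $\alpha\beta^2(z)$, and no commutation between $\alpha$ and $\beta$ is invoked. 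The Hom-Jacobi identity for $\bracebeta$ is then verified by the identical procedure applied to the cyclic sum, with $\beta\{,\} = \{,\}\beta^{\otimes 2}$ playing the role of $\beta\mu = \mu\beta^{\otimes 2}$; the outcome is $J_{A_\beta}(x,y,z) = \beta^2 J_A(x,y,z) = 0$.

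The Hom-Leibniz identity for $A_\beta$ requires both weak-morphism hypotheses simultaneously. Expanding each of the three terms $\{\beta\alpha(x),\mubeta(y,z)\}_\beta$, $\mubeta(\{x,y\}_\beta,\beta\alpha(z))$, and $\mubeta(\beta\alpha(y),\{x,z\}_\beta)$ and pushing two layers of $\beta$ inside each converts the proposed identity into $\beta^2$ applied to \eqref{homleibniz'} at $(x,y,z)$, which holds in $A$; thus Hom-Leibniz transfers to $A_\beta$.

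For the multiplicativity statement, the added hypothesis $\beta\alpha = \alpha\beta$ (from $\beta$ being a morphism, not merely a weak morphism) combined with $\alpha\mu = \mu\alpha^{\otimes 2}$ and $\beta\mu = \mu\beta^{\otimes 2}$ yields the chain $(\beta\alpha)\mubeta = \beta\alpha\beta\mu = \beta^2\alpha\mu = \beta^2\mu\alpha^{\otimes 2} = \mu(\beta^2\alpha)^{\otimes 2} = \mubeta(\beta\alpha)^{\otimes 2}$, and the analogous chain with $\{,\}$ in place of $\mu$ handles the bracket. The one pitfall worth flagging, and what I expect to be the main thing to get right rather than a serious obstacle, is resisting the temptation to substitute $(\beta^2 x,\beta^2 y,\beta^2 z)$ directly into the identities of $A$, which would require $\alpha$ and $\beta$ to commute; applying $\beta^2$ to the whole identity sidesteps this cleanly.
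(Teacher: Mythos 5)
Your proposal is correct and follows essentially the same route as the paper: apply $\beta^2$ to each defining identity of $A$ and use the weak-morphism relations to identify the result with the Hom-associator, Hom-Jacobian, and Hom-Leibniz identity of $A_\beta$, with the multiplicativity clause handled by the same operator chain. The only (trivial) omission is the remark that $\mu_\beta$ is commutative whenever $\mu$ is, which is needed for the commutative (Hom-Poisson) case.
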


\begin{proof}
If $\mu$ is commutative, then clearly so is $\mubeta$.  The rest of the proof applies whether $\mu$ is commutative or not.

Since $\beta$ is compatible with $\mu$ and $\{,\}$, the Hom-associators and the Hom-Jacobians of $A$ and $A_\beta$ are related as:
\[
\begin{split}
\beta^2 as_A &= (\beta\mu)\left(\beta\mu \otimes \beta\alpha - \beta\alpha \otimes \beta\mu\right)\\
&= as_{A_\beta}
\end{split}
\]
and
\[
\begin{split}
\beta^2 J_A &= (\beta\{,\})(\beta\{,\} \otimes \beta\alpha)(Id + \sigma + \sigma^2)\\
&= J_{A_\beta}.
\end{split}
\]
This implies that $(A,\mubeta,\beta\alpha)$ is a Hom-associative algebra and that $(A,\{,\}_\beta,\beta\alpha)$ is a Hom-Lie algebra.  Likewise, applying $\beta^2$ to the Hom-Leibniz identity \eqref{homleibniz} in $A$, we obtain
\[
(\beta\{,\})(\beta\alpha \otimes \beta\mu) = (\beta\mu)\left(\beta\{,\} \otimes \beta\alpha + (\beta\alpha \otimes \beta\{,\})(1~2)\right),
\]
which is the Hom-Leibniz identity in $A_\beta$.

For the multiplicativity assertion, suppose $A$ is multiplicative and $\beta$ is a morphism.  Let $\nu$ denote either $\mu$ or $\{,\}$.  Then
\[
\begin{split}
(\beta\alpha)\nu_\beta
&=(\beta\alpha)(\beta\nu)\\
&= \beta\nu\alpha^{\otimes 2} \beta^{\otimes 2}\\
&= \nu_\beta(\alpha\beta)^{\otimes 2}\\
&= \nu_\beta(\beta\alpha)^{\otimes 2}.
\end{split}
\]
This shows that $A_\beta$ is multiplicative.
\end{proof}

Two special cases of Theorem \ref{thm:twist} follow.  The following result says that each multiplicative (non-commutative) Hom-Poisson algebra gives rise to a derived sequence of multiplicative (non-commutative) Hom-Poisson algebras.

\begin{corollary}
\label{cor1:twist}
Let $(A,\{,\},\mu,\alpha)$ be a multiplicative (non-commutative) Hom-Poisson algebra.  Then
\[
A^n = \left(A,\bracen = \alpha^{n}\{,\},\mun = \alpha^{n}\mu, \alpha^{n+1}\right)
\]
is a multiplicative (non-commutative) Hom-Poisson algebra for each integer $n \geq 0$.
\end{corollary}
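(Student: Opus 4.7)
The plan is to deduce the corollary as a direct instance of Theorem \ref{thm:twist} with $\beta = \alpha^n$. The only thing to verify is that $\alpha^n$ is a morphism on $A$ (in the sense of Definition \ref{def:morphism}), so that the multiplicativity half of Theorem \ref{thm:twist} applies.

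First I would check that, for a multiplicative (non-commutative) Hom-Poisson algebra $(A,\{,\},\mu,\alpha)$, every iterate $\alpha^n$ (with $n \geq 0$) is a morphism. By hypothesis $\alpha$ itself is a morphism, so $\alpha\mu = \mu\alpha^{\otimes 2}$ and $\alpha\{,\} = \{,\}\alpha^{\otimes 2}$; trivially $\alpha\alpha = \alpha\alpha$. A short induction then gives
\[
\alpha^n \mu = \mu(\alpha^n)^{\otimes 2}, \qquad \alpha^n\{,\} = \{,\}(\alpha^n)^{\otimes 2},
\]
and $\alpha^n$ obviously commutes with $\alpha$. Hence $\alpha^n$ is a morphism $A \to A$ (and in particular a weak morphism).

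Next, I apply Theorem \ref{thm:twist} with $\beta = \alpha^n$. The conclusion is that
\[
A_{\alpha^n} = \bigl(A,\; \alpha^n\{,\},\; \alpha^n \mu,\; \alpha^n \alpha\bigr) = \bigl(A,\; \bracen,\; \mun,\; \alpha^{n+1}\bigr)
\]
is a (non-commutative) Hom-Poisson algebra, and moreover it is multiplicative because $A$ is multiplicative and $\alpha^n$ is a morphism. This is exactly the claimed structure $A^n$.

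There is essentially no obstacle here; the only mildly nontrivial point is the inductive verification that iterating a morphism yields a morphism, and this is immediate from the definition of multiplicativity. Everything else is a direct invocation of the previously established twisting theorem.
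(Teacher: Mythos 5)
Your proposal is correct and follows the same route as the paper: observe that multiplicativity makes $\alpha^n$ a (self-)morphism and then invoke Theorem \ref{thm:twist} with $\beta = \alpha^n$. The only difference is that you spell out the short induction showing $\alpha^n\mu = \mu(\alpha^n)^{\otimes 2}$ and $\alpha^n\{,\} = \{,\}(\alpha^n)^{\otimes 2}$, which the paper leaves implicit.
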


\begin{proof}
The multiplicativity of $A$ implies that $\alpha^n \colon A \to A$ is a morphism.  By Theorem \ref{thm:twist} $A_{\alpha^n} = A^n$ is a multiplicative (non-commutative) Hom-Poisson algebra.
\end{proof}

The following observation is the $\alpha = Id$ special case of Theorem \ref{thm:twist}.  It gives a twisting construction of multiplicative (non-commutative) Hom-Poisson algebras from (non-commutative) Poisson algebras.  A result of this form was first given by the author in \cite{yau1} for $G$-Hom-associative algebras.  This twisting construction highlights the fact that the usual category of (non-commutative) Poisson algebras is \emph{not} closed under twisting by self-morphisms, which, in view of Theorem \ref{thm:twist}, is in strong contrast with the category of (non-commutative) Hom-Poisson algebras.  This is a major conceptual difference between (non-commutative) Hom-Poisson algebras and (non-commutative) Poisson algebras.

\begin{corollary}
\label{cor2:twist}
Let $(A,\{,\},\mu)$ be a (non-commutative) Poisson algebra and $\beta \colon A \to A$ be a morphism.  Then
\[
A_\beta = \left(A,\bracebeta = \beta\{,\}, \mubeta = \beta\mu, \beta\right)
\]
is a multiplicative (non-commutative) Hom-Poisson algebra.
\end{corollary}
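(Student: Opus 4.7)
The plan is to derive this statement as an immediate specialization of Theorem \ref{thm:twist} to the case $\alpha = Id_A$. First I would observe that any (non-commutative) Poisson algebra $(A,\{,\},\mu)$ can be viewed as a (non-commutative) Hom-Poisson algebra $(A,\{,\},\mu,Id_A)$ in the sense of Definition \ref{def:hompoisson}: the Hom-associativity, Hom-Jacobi, and Hom-Leibniz identities with $\alpha = Id$ reduce to the ordinary associativity, Jacobi, and Leibniz identities, which hold by hypothesis. Moreover, this Hom-Poisson algebra is trivially multiplicative, since $Id \circ \mu = \mu = \mu \circ Id^{\otimes 2}$ and likewise for $\{,\}$.

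Next I would verify that $\beta$ qualifies as a morphism of Hom-Poisson algebras from $(A,\{,\},\mu,Id)$ to itself. By hypothesis $\beta$ is a morphism of Poisson algebras, hence $\beta\{,\} = \{,\}\beta^{\otimes 2}$ and $\beta\mu = \mu\beta^{\otimes 2}$, so $\beta$ is a weak morphism in the sense of Definition \ref{def:morphism}. The remaining compatibility with twisting maps, $\beta \circ Id = Id \circ \beta$, holds trivially, so $\beta$ is a (strong) morphism.

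Having established these two points, I would invoke Theorem \ref{thm:twist} directly: it yields that $A_\beta = (A, \beta\{,\}, \beta\mu, \beta \circ Id) = (A, \{,\}_\beta, \mu_\beta, \beta)$ is a (non-commutative) Hom-Poisson algebra, and the multiplicativity clause of the same theorem, applied to the multiplicative Hom-Poisson algebra $(A,\{,\},\mu,Id)$ and the morphism $\beta$, upgrades this to the assertion that $A_\beta$ is multiplicative.

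There is no genuine obstacle here; the corollary is explicitly advertised as the $\alpha = Id$ specialization of Theorem \ref{thm:twist}, so the only real work is bookkeeping. The one subtlety worth flagging in the writeup is to make clear why a classical morphism of Poisson algebras automatically satisfies the stronger morphism condition $f\alpha = \alpha_B f$ of Definition \ref{def:morphism} in this setting, so that the multiplicativity half of Theorem \ref{thm:twist} (and not merely the first half) can be applied.
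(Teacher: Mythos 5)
Your proposal is correct and matches the paper's approach exactly: the paper states this corollary as the $\alpha = Id$ special case of Theorem \ref{thm:twist} with no further argument, and your writeup simply makes that specialization explicit, including the (trivial) check that a Poisson morphism commutes with the identity twisting map so that the multiplicativity clause applies.
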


In the setting of Corollary \ref{cor2:twist}, two natural questions arise: Is the triple
\begin{equation}
\label{betadeformation}
A_\beta' = \left(A,\bracebeta = \beta\{,\}, \mubeta = \beta\mu\right)
\end{equation}
a (non-commutative) Poisson algebra?  If so, is it isomorphic to $A$ itself?  To study these questions, we introduce the following concepts.

\begin{definition}
\label{def:deform}
Let $(A,\{,\},\mu)$ be a non-commutative Poisson algebra.
\begin{enumerate}
\item
Given a morphism $\beta \colon A \to A$, the triple $A_\beta'$ in \eqref{betadeformation} is called the \textbf{$\beta$-twisting of $A$.}  A \textbf{twisting of $A$} is a $\beta$-twisting of $A$ for some morphism $\beta \colon A \to A$.
\item
The $\beta$-twisting $A_\beta'$ of $A$ is called \textbf{trivial} if
\[
\bracebeta = 0 = \mubeta.
\]
$A_\beta'$ is called \textbf{non-trivial} if either $\bracebeta \not= 0$ or $\mubeta \not= 0$.
\item
$A$ is called \textbf{rigid} if every twisting of $A$ is either trivial or isomorphic to $A$.
\end{enumerate}
\end{definition}

The reader is cautioned not to confuse the above notion of rigidity with Gerstenhaber's \cite{ger2}.

Every non-commutative Poisson algebra $A$ has at least one trivial twisting $A_\beta'$, which is obtained by taking $\beta = 0$.  As we will see below, it is possible for a $\beta$-twisting to be trivial even if $\beta$ is not the zero map.  On the other hand, if a non-commutative Poisson algebra $A$ has either a non-zero Lie bracket or a non-zero associative product, then the $Id_A$-twisting is non-trivial.  As we will see below, it is possible for a $\beta$-twisting to be isomorphic to $A$ even if $\beta$ is not the identity map.

The following result gives some basic criteria for non-rigidity.

\begin{proposition}
\label{prop:nonrigidity}
Let $(A,\{,\},\mu)$ be a non-commutative Poisson algebra.  Suppose there exists a morphism $\beta \colon A \to A$ such that either:
\begin{enumerate}
\item
$\mubeta = \beta\mu$ is not associative or
\item
$\{,\}_\beta = \beta\{,\}$ does not satisfy the Jacobi identity.
\end{enumerate}
Then $A$ is not rigid.
\end{proposition}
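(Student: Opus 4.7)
The plan is to argue the contrapositive directly from the definition of rigidity: produce a morphism $\beta$ whose $\beta$-twisting $A_\beta'$ is simultaneously non-trivial and not isomorphic to $A$. The hypothesis already supplies such a $\beta$; one only needs to observe that the failure of associativity or of the Jacobi identity rules out both of the escape routes in Definition \ref{def:deform}(3).

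First I would dispense with triviality. If $A_\beta'$ were trivial, then $\mubeta = 0$ and $\bracebeta = 0$, and the zero map is obviously associative and satisfies the Jacobi identity. Hence, under either hypothesis (1) or (2) of the proposition, $A_\beta'$ is non-trivial.

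Next I would rule out $A_\beta' \cong A$. An isomorphism of non-commutative Poisson algebras is in particular a linear isomorphism $\varphi \colon A \to A_\beta'$ compatible with both binary operations. Any such $\varphi$ transports the non-commutative Poisson structure on $A$ to a non-commutative Poisson structure on $A_\beta'$; in particular $\mubeta$ would be associative and $\bracebeta$ would satisfy the Jacobi identity. Either hypothesis of the proposition contradicts this, so $A_\beta'$ cannot be isomorphic to $A$ as a non-commutative Poisson algebra. (Note also that the target of such an isomorphism must first actually \emph{be} a non-commutative Poisson algebra, which already fails for the same reason.)

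Combining these two steps, the $\beta$-twisting $A_\beta'$ is neither trivial nor isomorphic to $A$, so by Definition \ref{def:deform}(3) the algebra $A$ is not rigid. There is no real obstacle here; the content of the proposition is essentially that the two permitted outcomes in the definition of rigidity both force $\mubeta$ to be associative and $\bracebeta$ to satisfy the Jacobi identity, so a morphism violating either of these conditions automatically witnesses non-rigidity.
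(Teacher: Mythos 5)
Your argument is correct and is essentially the paper's own proof: both show that triviality of $A_\beta'$ and an isomorphism $A_\beta' \cong A$ would each force $\mubeta$ to be associative and $\bracebeta$ to satisfy the Jacobi identity, contradicting the hypothesis. You simply spell out the two cases in a bit more detail than the paper does.
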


\begin{proof}
The $\beta$-twisting $A_\beta'$ is non-trivial, since otherwise $\mubeta$ would be associative and $\{,\}_\beta$ would satisfy the Jacobi identity.  For the same reason, the $\beta$-twisting $A_\beta'$ cannot be isomorphic to $A$.
\end{proof}

Using Proposition \ref{prop:nonrigidity} we now consider some classes of non-commutative Poisson algebras and study their (non-)rigidity.

\begin{example}[\textbf{Free associative algebras are not rigid}]
\label{ex:poly}
Let $A = (\bk(S),\mu)$ be the free unital associative algebra on a non-empty set $S$.  Equip $A$ with the non-commutative Poisson algebra structure in which the Poisson bracket is the commutator bracket (Proposition \ref{prop:homasspoisson}).  We claim that $A$ is not rigid.  By Proposition \ref{prop:nonrigidity} it suffices to show that $\mualpha = \alpha\mu$ is not associative for some morphism $\alpha$ on $A$.  Pick an element $X \in S$, and let $\alpha \colon A \to A$ be the morphism determined for $Y \in S$ by
\[
\alpha(Y) =
\begin{cases}
1 + X &\text{ if $Y = X$},\\
Y & \text{ if $Y \not= X$}.
\end{cases}
\]
For $n \geq 1$, we have
\[
\alpha^n(X) = n + X.
\]
The associator of $\mualpha$ applied to $(X,X,\alpha(X))$ is:
\[
\begin{split}
\mualpha(\mualpha(X,X),\alpha(X)) & - \mualpha(X,\mualpha(X,\alpha(X)))\\
&= \left(\alpha^2(X)\right)^3 - \alpha(X)\alpha^2(X)\alpha^3(X)\\
&= (2 + X)^3 - (1+X)(2+X)(3+X)\\
&\not= 0.
\end{split}
\]
So $\mualpha$ is not associative, and $A$ is not rigid.
\qed
\end{example}

\begin{example}[\textbf{Matrix algebras are not rigid}]
\label{ex:matrix}
Let $A = (M_n(\bk),\mu)$ be the associative algebra of $n \times n$ matrices over $\bk$ for some $n \geq 2$.  Equip $A$ with the non-commutative Poisson algebra structure in which the Poisson bracket is the commutator bracket (Proposition \ref{prop:homasspoisson}).  We claim that $A$ is not rigid.  By Proposition \ref{prop:nonrigidity} it suffices to show that $\mualpha = \alpha\mu$ is not associative for some morphism $\alpha$ on $A$.

To construct such a morphism, consider the diagonal matrix
\[
D = \diagonal.
\]
The inverse of $D$ is the same as $D$ except that its $(1,1)$-entry is $2$.  There is a morphism $\alpha \colon A \to A$ given by
\[
\alpha((a_{ij})) = D(a_{ij})D^{-1} = \alphamatrix.
\]
From here on, a matrix $(a_{ij}) \in A$ with $a_{ij} = 0$ whenever $i \geq 3$ or $j \geq 3$ is abbreviated to its upper left $2 \times 2$ submatrix
\[
\matrixa.
\]
Consider such a matrix
\[
X = \matrixx \in A.
\]
Then a quick computation shows that
\[
\mualpha(\mualpha(X,X),\alpha(X)) = \alpha^2(X^3) = \matrixxa,
\]
whereas
\[
\mualpha(X,\mualpha(X,\alpha(X))) = \alpha\left(X\alpha(X)\alpha^2(X)\right) = \matrixxb.
\]
So $\mualpha$ is not associative, and $A$ is not rigid.
\qed
\end{example}

\begin{example}[\textbf{Hom-Poisson algebras from linear Poisson structures}]
\label{ex:linear}
Let $\fg$ be a finite-dimensional Lie algebra, and let $(S(\fg),\mu)$ be its symmetric algebra.  If $\{e_i\}_{i=1}^n$ is a basis of $\fg$, then $S(\fg)$ is the polynomial algebra $\bk[e_1,\ldots,e_n]$.  Suppose the structure constants for $\fg$ are given by
\[
[e_i,e_j] = \sum_{k=1}^n c_{ij}^k e_k.
\]
Then the symmetric algebra $S(\fg)$ becomes a Poisson algebra with the Poisson bracket
\begin{equation}
\label{FG}
\{F,G\} = \frac{1}{2} \sum_{i,j,k=1}^n c_{ij}^k e_k \left(\frac{\partial F}{\partial e_i}\frac{\partial G}{\partial e_j} - \frac{\partial F}{\partial e_j}\frac{\partial G}{\partial e_i}\right)
\end{equation}
for $F,G \in S(\fg)$.  This Poisson algebra structure on $S(\fg)$ is called the \textbf{linear Poisson structure}.  Note that $\{e_i,e_j\} = [e_i,e_j]$.

Let $\alpha \colon \fg \to \fg$ be a Lie algebra morphism.  It extends to an associative algebra morphism $\alpha \colon S(\fg) \to S(\fg)$ determined by
\[
\alpha(F(e_1,\ldots,e_n)) = F(\alpha(e_1),\ldots,\alpha(e_n)).
\]
Moreover, we claim that $\alpha$ respects the Poisson bracket \eqref{FG}, i.e., $\alpha$ is a morphism of Poisson algebras on $S(\fg)$.  Indeed, suppose that
\[
\alpha\{F,G\} = \{\alpha(F),\alpha(G)\} \quad\text{and}\quad
\alpha\{H,G\} = \{\alpha(H),\alpha(G)\}
\]
for some $F,G,H \in S(\fg)$.  Then the Leibniz identity
\[
\{FH,G\} = \{F,G\}H + F\{H,G\}
\]
implies
\[
\begin{split}
\alpha\{FH,G\}
&= \alpha(\{F,G\})\alpha(H) + \alpha(F)\alpha(\{H,G\})\\
&= \{\alpha(F),\alpha(G)\}\alpha(H) + \alpha(F)\{\alpha(H),\alpha(G)\}\\
&= \{\alpha(F)\alpha(H),\alpha(G)\}\\
&= \{\alpha(FH),\alpha(G)\}.
\end{split}
\]
Likewise, if
\[
\alpha\{F,G\} = \{\alpha(F),\alpha(G)\} \quad\text{and}\quad
\alpha\{F,H\} = \{\alpha(F),\alpha(H)\},
\]
then the Leibniz identity
\[
\{F,GH\} = \{F,G\}H + G\{F,H\}
\]
implies
\[
\alpha\{F,GH\} = \{\alpha(F),\alpha(GH)\}.
\]
Therefore, since $S(\fg)$ is the polynomial algebra $\bk[e_1,\ldots,e_n]$, to check that
\[
\alpha\{F,G\} = \{\alpha(F),\alpha(G)\}
\]
for all $F,G \in S(\fg)$, it suffices to show
\begin{equation}
\label{eiej}
\alpha\{e_i,e_j\} = \{\alpha(e_i),\alpha(e_j)\}
\end{equation}
for all $i,j \in \{1,\ldots,n\}$.  The desired identity \eqref{eiej} is true because $\{e_i,e_j\} = [e_i,e_j]$ and $\alpha$ is a Lie algebra morphism on $\fg$.

Therefore, given a Lie algebra morphism $\alpha \colon \fg \to \fg$, the extended map $\alpha \colon S(\fg) \to S(\fg)$ is a morphism of Poisson algebras.  By Corollary \ref{cor2:twist} there is a multiplicative Hom-Poisson algebra
\[
S(\fg)_\alpha = (S(\fg),\{,\}_\alpha = \alpha\{,\}, \mualpha = \alpha\mu, \alpha),
\]
which reduces to the linear Poisson structure $S(\fg)$ if $\alpha = Id$.
\qed
\end{example}

\begin{example}[\textbf{$S(\sltwo)$ is not rigid}]
In this example, we show that the symmetric algebra $(S(\sltwo),\mu)$ on the Lie algebra $\sltwo$, equipped with the linear Poisson structure \eqref{FG}, is not rigid in the sense of Definition \ref{def:deform}.

The Lie algebra $\sltwo$ has a basis $\{e,f,h\}$, with respect to which the Lie bracket is given by
\[
[h,e] = 2e,\quad [h,f] = -2f, \quad [e,f] = h.
\]
To show that $S(\sltwo) = \bk[e,f,h]$ is not rigid, consider the Lie algebra morphism $\alpha \colon \sltwo \to \sltwo$ given by
\[
\alpha(e) = \lambda e,\quad \alpha(f) = \lambda^{-1}f, \quad \alpha(h) = h,
\]
where $\lambda \in \bk$ is a fixed scalar with $\lambda \not= 0,1$.  As in Example \ref{ex:linear}, denote by $\alpha \colon S(\sltwo) \to S(\sltwo)$ the extended map, which is a Poisson algebra morphism.  By Proposition \ref{prop:nonrigidity}, the Poisson algebra $S(\sltwo)$ is not rigid if $\mualpha = \alpha\mu$ is not associative.  We have
\[
\begin{split}
\mualpha(\mualpha(e,h),h) - \mualpha(e,\mualpha(h,h))
&= \alpha^2(e)\alpha^2(h)\alpha(h) - \alpha(e)\alpha^2(h)\alpha^2(h)\\
&= (\lambda^2 - \lambda)eh^2,
\end{split}
\]
which is not $0$ in the symmetric algebra $S(\sltwo)$ because $\lambda \not= 0,1$.  Therefore, $\mualpha$ is not associative, and the linear Poisson structure on $S(\sltwo)$ is not rigid.
\qed
\end{example}

\begin{example}[\textbf{Hom-Poisson algebras from Poisson manifolds}]
\label{ex:manifold}
In this example, we discuss how multiplicative Hom-Poisson algebras arise from Poisson manifolds, which include symplectic manifolds and Poisson-Lie groups.  The reader is referred to, e.g., \cite{cp,vaisman} for discussion of Poisson manifolds.  The ground field in this example is the field of real numbers.

Let $M$ be a Poisson manifold.  This means that $M$ is a smooth manifold and that the commutative associative algebra $(\cinftym,\mu)$ (under point-wise multiplication) of smooth $\bR$-valued functions on $M$ is equipped with a Poisson algebra structure
\[
\{,\} \colon \cinftym \times \cinftym \to \cinftym.
\]
Let $N$ be another Poisson manifold.  A \textbf{Poisson map} $\varphi \colon M \to N$ between two Poisson manifolds is a smooth map such that
\[
\{f,g\}\varphi = \{f\varphi, g\varphi\}
\]
for all $f,g \in \cinftyn$.  Given such a Poisson map, the induced map
\[
\varphi^* \colon \cinftyn \to \cinftym,\quad \varphi^*(f) = f\varphi
\]
is a morphism of Poisson algebras.

Let $\varphi \colon M \to M$ be a Poisson map.  By Corollary \ref{cor2:twist} the morphism $\varphi^* \colon \cinftym \to \cinftym$ yields a multiplicative Hom-Poisson algebra
\[
\cinftym_{\varphi^*} = \left(\cinftym, \bracephi = \varphi^*\{,\}, \muphi = \varphi^*\mu, \varphi^*\right).
\]
If $\varphi = Id_M$, then $\cinftym_{Id^*}$ is the original Poisson algebra $\cinftym$.
\qed
\end{example}

We now give sufficient conditions that guarantee that the Poisson algebra $\cinftym$ of smooth functions on $M$ is \emph{not} rigid in the sense of Definition \ref{def:deform}.

\begin{corollary}
\label{cor:rigidmanifold}
Let $M$ be a Poisson manifold.  Suppose there exist a Poisson map $\varphi \colon M \to M$, $x \in M$, and $f \in \cinftym$ such that the matrix
\[
\Phi = \fphimatrix
\]
has non-zero trace and non-zero determinant.  Then the Poisson algebra $\cinftym$ is not rigid.
\end{corollary}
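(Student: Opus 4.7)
The plan is to apply Proposition \ref{prop:nonrigidity} to the morphism $\varphi^* \colon C^\infty(M) \to C^\infty(M)$ induced by the Poisson map $\varphi$. More specifically, I would show that $\mu_{\varphi^*} = \varphi^*\mu$ is not associative by producing an explicit triple of functions in $C^\infty(M)$ whose $\mu_{\varphi^*}$-associator does not vanish at the point $x$, with the value of the nonvanishing expression turning out to be a nonzero multiple of $(\text{tr}\,\Phi)(\det\Phi)$.

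The key observation, which mirrors the computation in Example \ref{ex:poly}, is that because $\varphi^*$ is a morphism of the commutative associative algebra $(C^\infty(M),\mu)$, one has the identities
\begin{align*}
\mu_{\varphi^*}(\mu_{\varphi^*}(a,b),c) &= (\varphi^*)^2(a)\,(\varphi^*)^2(b)\,\varphi^*(c),\\
\mu_{\varphi^*}(a,\mu_{\varphi^*}(b,c)) &= \varphi^*(a)\,(\varphi^*)^2(b)\,(\varphi^*)^2(c),
\end{align*}
and the relevant iterates are $(\varphi^*)^n(f) = f \circ \varphi^n$. I would apply these to the triple $(f,f,\varphi^*(f))$, so that the first expression becomes $(f\circ\varphi^2)^3$ and the second becomes $(f\circ\varphi)(f\circ\varphi^2)(f\circ\varphi^3)$.

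Evaluating the resulting associator at $x$ gives
\[
f(\varphi^2(x))^3 - f(\varphi(x))f(\varphi^2(x))f(\varphi^3(x)) = f(\varphi^2(x))\left[f(\varphi^2(x))^2 - f(\varphi(x))f(\varphi^3(x))\right].
\]
The bracketed factor is precisely $\det\Phi$, and the prefactor $f(\varphi^2(x))$ equals $\tfrac{1}{2}\,\text{tr}\,\Phi$. Under the two hypotheses $\text{tr}\,\Phi \neq 0$ and $\det\Phi \neq 0$, this product is nonzero, so $\mu_{\varphi^*}$ fails to be associative at the triple $(f,f,\varphi^*(f))$. Proposition \ref{prop:nonrigidity} then yields non-rigidity of $C^\infty(M)$.

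There is no real obstacle beyond correctly bookkeeping the iterated pullbacks; the main conceptual point is choosing the triple $(f,f,\varphi^*(f))$, which is exactly what makes the resulting difference factor through the $2\times 2$ Hankel-type matrix $\Phi$ in the hypothesis. Everything else is a direct specialization of the associator computation from Example \ref{ex:poly} to the setting of smooth functions on a Poisson manifold.
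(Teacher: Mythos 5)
Your proposal is correct and follows essentially the same route as the paper: both apply Proposition \ref{prop:nonrigidity} by showing $\mu_{\varphi^*}$ fails associativity on the triple $(f,f,f\varphi)$, evaluating at $x$ to get $f(\varphi^2(x))^3 - f(\varphi(x))f(\varphi^2(x))f(\varphi^3(x))$, which is nonzero precisely because $\operatorname{tr}\Phi \neq 0$ and $\det\Phi \neq 0$. Your factorization of this difference as $\tfrac{1}{2}(\operatorname{tr}\Phi)(\det\Phi)$ is a nice way of making the role of the matrix $\Phi$ explicit, but the argument is the same.
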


\begin{proof}
Using Proposition \ref{prop:nonrigidity} and the notations in Example \ref{ex:manifold}, it suffices to show that $\muphi$ is not associative.  That the matrix $\Phi$ has non-vanishing trace and determinant means
\begin{equation}
\label{fphi}
f(\varphi^2(x)) \not= 0 \quad\text{and}\quad
\left(f(\varphi^2(x))\right)^2 \not= f(\varphi(x))f(\varphi^3(x)).
\end{equation}
Using the conditions in \eqref{fphi}, we have:
\[
\begin{split}
\muphi\left(\muphi(f,f),f\varphi\right)(x)
&= \left(f(\varphi^2(x))\right)^3\\
&\not= f(\varphi(x)) f(\varphi^2(x)) f(\varphi^3(x))\\
&= \muphi\left(f,\muphi(f,f\varphi)\right)(x).
\end{split}
\]
This shows that $\muphi$ is not associative, so $\cinftym$ is not rigid.
\end{proof}

The following example illustrates how the non-rigidity criterion in Corollary \ref{cor:rigidmanifold} can be applied.

\begin{example}[\textbf{$\cinfty(\rtwon)$ is not rigid}]
The Euclidean space $\rtwon$ is a Poisson manifold with Poisson structure
\[
\{f,g\} = \sum_{i,j=1}^n \left(\frac{\partial f}{\partial x_i}\frac{\partial g}{\partial x_{i+n}} - \frac{\partial f}{\partial x_{i+n}}\frac{\partial g}{\partial x_i}\right)
\]
for $f,g \in \cinfty(\rtwon)$.   Using Corollary \ref{cor:rigidmanifold} we show that the Poisson algebra $\cinfty(\rtwon)$ is not rigid in the sense of Definition \ref{def:deform}.

Consider the map $\varphi \colon \rtwon \to \rtwon$ defined as
\[
\varphi(a_1,\ldots,a_{2n}) = (a_1 + c_1, \ldots, a_{2n} + c_{2n}),
\]
where the $c_j \in \bR$ are fixed scalars, not all of which are zero, say, $c_i \not= 0$.  The map $\varphi$ is a Poisson map by Chain Rule.  Let $f \in \cinfty(\rtwon)$ be the function
\[
f(a_1,\ldots,a_{2n}) = a_i,
\]
and let $0$ be the origin in $\rtwon$.  For each $k \geq 0$, we have
\[
\varphi^k(0) = (kc_1, \ldots, kc_{2n}).
\]
So
\[
f(\varphi^2(0)) = 2c_i \not= 0
\]
and
\[
\begin{split}
f(\varphi^2(0))^2 - f(\varphi(0))f(\varphi^3(0))
&= (2c_i)^2 - c_i(3c_i)\\
&= c_i^2 \not= 0.
\end{split}
\]
Therefore, by Corollary \ref{cor:rigidmanifold} the Poisson algebra $\cinfty(\rtwon)$ is not rigid.
\qed
\end{example}

\begin{example}[\textbf{Every Poisson structure on the Heisenberg algebra is rigid}]
\label{ex:heisenberg}
Over the ground field of complex numbers, the Poisson algebra structures on the Heisenberg algebra are classified by Goze and Remm in \cite{gr} (section 2).  In this example, we show that these Poisson algebras are all rigid in the sense of Definition \ref{def:deform}.

Let us first recall the Goze-Remm classification of Poisson algebra structures on the Heisenberg algebra \cite{gr}.  Let $\hei$ be the complex Heisenberg algebra, which is the three-dimensional complex Lie algebra with a basis $\{X,Y,Z\}$ such that
\begin{equation}
\label{heisenberg}
[X,Y] = Z, \quad [X,Z] = 0 = [Y,Z].
\end{equation}
The isomorphism classes of Poisson algebra structures on $\hei$ are divided into two families.  First there is a one-parameter family of Poisson algebra structures on $\hei$,
\[
\pthreeone = \{XY = \zeta Z\},
\]
where $\zeta$ is any complex number.  The notation above means that in the Poisson algebra $\pthreeone$, the commutative associative product satisfies
\begin{equation}
\label{xyzetaz}
XY = \zeta Z = YX,
\end{equation}
and the unspecified binary products of basis elements are all zero.  The only other isomorphism class of Poisson algebra structure on $\hei$ is the Poisson algebra
\[
\pthreetwo = \{X^2 = Z\}.
\]
We aim to show that the Poisson algebras $\pthreeone$ and $\pthreetwo$ are all rigid in the sense of Definition \ref{def:deform}.

Consider the Lie algebra morphisms on the Heisenberg algebra. It follows from the Heisenberg algebra relations \eqref{heisenberg} that a linear map $\alpha \colon \hei \to \hei$ is a Lie algebra morphism if and only if its matrix with respect to the basis $\{X,Y,Z\}$ is
\begin{equation}
\label{alphaheisenberg}
\alpha = \alphahei \quad\text{with}\quad b = a_{11}a_{22} - a_{21}a_{12},
\end{equation}
where the $a_{ij}$ are arbitrary complex numbers.  If $\alpha$ is a morphism on either $\pthreeone$ or $\pthreetwo$, then we obtain further relations among the $a_{ij}$.

(i) We first show that $\pthreeone$ is rigid.  If $\alpha \colon \pthreeone \to \pthreeone$ is a Poisson algebra morphism, then applying $\alpha$ to the relations \eqref{xyzetaz} yields
\[
\zeta b = \zeta(a_{11}a_{22} + a_{21}a_{12}),
\]
so
\begin{equation}
\label{relation1}
\zeta a_{21}a_{12} = 0.
\end{equation}
Likewise, applying $\alpha$ to the relations
\[
X^2 = 0 = Y^2
\]
in $\pthreeone$ yields
\begin{equation}
\label{relation2}
\zeta a_{11}a_{21} = 0 = \zeta a_{12}a_{22}.
\end{equation}
Conversely, if a Lie algebra morphism $\alpha \colon \hei \to \hei$ satisfies \eqref{relation1} and \eqref{relation2}, then it is a Poisson algebra morphism on $\pthreeone$.  We consider the two cases: $\zeta = 0$ and $\zeta \not= 0$.

\begin{itemize}
\item
If $\zeta = 0$, then \eqref{relation1} and \eqref{relation2} do not impose further relations on $\alpha$ in  \eqref{alphaheisenberg}.  Since $\pthreeonezero$ has a trivial associative product $\mu$, every $\alpha$-twisting of $\pthreeonezero$ also has a trivial Hom-associative product $\mualpha$.  With $\alpha$ as in \eqref{alphaheisenberg}, the induced Hom-Lie bracket as in Corollary \ref{cor2:twist} is determined by
\[
[X,Y]_\alpha = bZ.
\]
If $b = 0$, then $[,]_\alpha = 0$ and the $\alpha$-twisting
\[
\pthreeonezero_\alpha' = (\pthreeonezero, [,]_\alpha, \mu_\alpha)
\]
of $\pthreeonezero$ is trivial.  If $b \not= 0$, then the map
\[
\pthreeonezero \to \pthreeonezero_\alpha'\quad\text{with}\quad
\begin{cases}
X & \mapsto~ X,\\
Y & \mapsto~ Y,\\
Z & \mapsto~ bZ
\end{cases}
\]
is an isomorphism of Poisson algebras.  In other words, the Poisson algebra $\pthreeonezero$ is rigid.

\item
If $\zeta \not= 0$, then the relations \eqref{relation1} and \eqref{relation2} become
\[
a_{21}a_{12} = a_{11}a_{21} = a_{12}a_{22} = 0.
\]
This means that $\alpha$ takes one of the following three forms:
\[
\begin{split}
\alpha_1 &= \alphaone,\\
\alpha_2 &= \alphatwo\quad\text{with}\quad a_{12}\not= 0,\\
\alpha_3 &= \alphathree\quad\text{with}\quad a_{21}\not= 0.
\end{split}
\]
For $\alpha = \alpha_2$ or $\alpha_3$, we have $\alpha(Z) = 0$, so $[,]_\alpha = 0 = \mualpha$.  This implies that the $\alpha_2$-twisting and the $\alpha_3$-twisting of $\pthreeone$ (with $\zeta \not= 0$) are both trivial.

For $\alpha = \alpha_1$, if $a_{11} = 0$ or $a_{22} = 0$, then $\alpha(Z) = 0$.  This implies that $[,]_\alpha = 0 = \mualpha$, so the $\alpha_1$-twisting of $\pthreeone$ (with $\zeta \not= 0$) is trivial when $a_{11}a_{22} = 0$.  On the other hand, suppose $a_{11}a_{22} \not= 0$.  Since
\[
\begin{split}
\mualpha(X,Y) &= \zeta a_{11}a_{22}Z = \mualpha(Y,X),\\
[X,Y]_\alpha &= a_{11}a_{22}Z,
\end{split}
\]
the map
\[
\pthreeone \to \pthreeone_\alpha' \quad\text{with}\quad
\begin{cases}
X & \mapsto~ X,\\
Y & \mapsto~ Y,\\
Z & \mapsto~ a_{11}a_{22}Z
\end{cases}
\]
is an isomorphism of Poisson algebras.  We have shown that when $\zeta \not= 0$, every twisting of $\pthreeone$ is either trivial or isomorphic to $\pthreeone$ itself.  Therefore, $\pthreeone$ is rigid.
\end{itemize}

(ii) Next we show that $\pthreetwo$ is rigid.  Suppose $\alpha \colon \pthreetwo \to \pthreetwo$ is a morphism of Poisson algebras.  Applying $\alpha$ to the relations
\[
Y^2 = 0\quad\text{and}\quad X^2 = Z,
\]
we obtain
\begin{equation}
\label{a12}
a_{12} = 0 \quad\text{and}\quad a_{11}a_{22} = a_{11}^2.
\end{equation}
Conversely, if a Lie algebra morphism $\alpha \colon \hei \to \hei$ satisfies \eqref{a12}, then it is a Poisson algebra morphism on $\pthreetwo$.  Therefore, the Poisson algebra morphisms $\alpha$ on $\pthreetwo$ are
\[
\alpha_4 = \alphafour
\]
and
\[
\alpha_5 = \alphafive \quad\text{with}\quad a_{11} \not= 0.
\]
Since $\alpha_4(Z) = 0$, we have $[,]_{\alpha_4} = 0 = \mu_{\alpha_4}$, so the $\alpha_4$-twisting of $\pthreetwo$ is trivial.

For $\alpha = \alpha_5$, we have $\alpha(Z) = a_{11}^2 Z$ and
\[
\mualpha(X,X) = a_{11}^2 Z  = [X,Y]_{\alpha}.
\]
So the map
\[
\pthreetwo \to (\pthreetwo)_{\alpha}' \quad\text{with}\quad
\begin{cases}
X & \mapsto~ X,\\
Y & \mapsto~ Y,\\
Z & \mapsto~ a_{11}^2Z
\end{cases}
\]
is an isomorphism of Poisson algebras.  We have shown that every twisting of $\pthreetwo$ is either trivial or isomorphic to $\pthreetwo$ itself.  Therefore, $\pthreetwo$ is rigid.
\qed
\end{example}

\section{Admissible Hom-Poisson algebras}
\label{sec:admissible}

A Poisson algebra has two binary operations, the Lie bracket and the commutative associative product.  It is shown in \cite{gr,mr} that Poisson algebras can be described using only one binary operation via the polarization-depolarization process.  The purpose of this section is to extend this alternative description of Poisson algebras to Hom-Poisson algebras.  In other words, we will show that a Hom-Poisson algebra can be described using only the twisting map and one binary operation.

We first define the Hom-algebras that correspond to Hom-Poisson algebras.

\begin{definition}
\label{def:admissible}
Let $(A,\mu,\alpha)$ be a Hom-algebra.  Then $A$ is called an \textbf{admissible Hom-Poisson algebra} if it satisfies
\begin{equation}
\label{admissibility}
as_A(x,y,z) = \frac{1}{3}\left\lbrace(xz)\alpha(y) - (zx)\alpha(y) + (yz)\alpha(x) - (yx)\alpha(z)\right\rbrace
\end{equation}
for all $x,y,z \in A$, where $as_A$ is the Hom-associator \eqref{homassociator} of $A$.
\end{definition}

As usual in \eqref{admissibility} the product $\mu$ is denoted by juxtapositions of elements in $A$.  An admissible Hom-Poisson algebra with $\alpha = Id$ is exactly an \textbf{admissible Poisson algebra} as defined in \cite{gr}.

To compare Hom-Poisson algebras and admissible Hom-Poisson algebras, we need the following function, which generalizes a  similar function in \cite{mr}.

\begin{definition}
Let $(A,\mu,\alpha)$ be a Hom-algebra.  Define the quadruple
\begin{equation}
\label{pa}
P(A) = \left(A, \{,\} = \frac{1}{2}(\mu - \muop), \bullet = \frac{1}{2}(\mu + \muop), \alpha\right),
\end{equation}
called the \textbf{polarization} of $A$.  We call $P$ the \textbf{polarization function}.
\end{definition}

The following result says that admissible Hom-Poisson algebras, and only these Hom-algebras, give rise to Hom-Poisson algebras via polarization.  It is the Hom-version of \cite{mr} (Example 2).

\begin{theorem}
\label{thm:polar}
Let $(A,\mu,\alpha)$ be a Hom-algebra.  Then the polarization $P(A)$ is a Hom-Poisson algebra if and only if $A$ is an admissible Hom-Poisson algebra.
\end{theorem}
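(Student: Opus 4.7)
The plan is to substitute $\mu = \bullet + \{,\}$, the decomposition into symmetric and anti-symmetric parts, into both sides of the admissibility identity \eqref{admissibility} and compare. Since $\bullet$ is automatically commutative and $\{,\}$ automatically anti-symmetric, $P(A)$ is a Hom-Poisson algebra if and only if three things hold: $(A,\bullet,\alpha)$ is Hom-associative, $(A,\{,\},\alpha)$ satisfies Hom-Jacobi, and the Hom-Leibniz identity holds. So the theorem reduces to proving that admissibility of $\mu$ is equivalent to this trio.

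For the $(\Leftarrow)$ direction, I would assume the three Hom-Poisson axioms and expand $as_A(x,y,z)$ bilinearly via $\mu = \bullet + \{,\}$. It decomposes as a pure-$\bullet$ piece $as_\bullet(x,y,z)$ (killed by Hom-associativity), two mixed pieces $[\{x,y\}\bullet\alpha(z) - \alpha(x)\bullet\{y,z\}]$ and $[\{x\bullet y,\alpha(z)\} - \{\alpha(x), y\bullet z\}]$ (cancelled against each other using Hom-Leibniz in both forms plus commutativity of $\bullet$), and a pure-bracket piece $\{\{x,y\},\alpha(z)\} - \{\alpha(x),\{y,z\}\}$ (collapsed to $\{\{x,z\},\alpha(y)\}$ by Hom-Jacobi). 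An analogous expansion of the right-hand side of \eqref{admissibility}, this time using Lemma \ref{lem:homass} to kill the pure-$\bullet$ triple products $(y\bullet z)\bullet\alpha(x) - (x\bullet y)\bullet\alpha(z)$ and applying Hom-Leibniz and Hom-Jacobi on the remaining bracket terms, also reduces to $\{\{x,z\},\alpha(y)\}$. Both sides agree, so admissibility holds.

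For the $(\Rightarrow)$ direction, I would extract the three axioms by taking specific permutation combinations of the admissibility identity. First, the cyclic sum in $(x,y,z)$ annihilates the right-hand side of \eqref{admissibility} by direct check, and expanding the resulting equation $\sum_{\text{cyc}} as_A(x,y,z) = 0$ in terms of $\bullet$ and $\{,\}$ yields the relation $J_{\{,\}}(x,y,z) + \sum_{\text{cyc}}\{x\bullet y, \alpha(z)\} = 0$. Next, antisymmetrizing \eqref{admissibility} in $x \leftrightarrow z$ produces the flexibility identity $as_A(x,y,z) + as_A(z,y,x) = 0$; expanding this and using commutativity of $\bullet$ and anti-symmetry of $\{,\}$ to eliminate the pure pieces leaves the Hom-Leibniz identity. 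Substituting Hom-Leibniz back into the cyclic-sum relation forces $J_{\{,\}} = 0$, and finally the raw admissibility identity, with the above in hand, reduces to $as_\bullet = 0$.

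The main obstacle is the $(\Rightarrow)$ direction: a single identity in $\mu$ must be made to yield three logically independent axioms in $\bullet$ and $\{,\}$, and the derivation depends sensitively on how the two symmetry types transform under the various permutations and on Lemma \ref{lem:homass} for rearranging pure-$\bullet$ triple products. The classical, untwisted version of this extraction is essentially the content of \cite{gr,mr}; the Hom-generalization is formally parallel but requires tracking $\alpha$ consistently through every application of Hom-associativity and Hom-Leibniz, which is the source of most of the bookkeeping.
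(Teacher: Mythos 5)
Your overall strategy is sound and organized genuinely differently from the paper's: the paper's central device is the trio of exact identities \eqref{4as}, \eqref{4j}, and \eqref{paleibniz}, which express the Hom-associativity, Hom-Jacobi, and Hom-Leibniz defects of $P(A)$ as explicit linear combinations of Hom-associators of $A$, and which are then killed using Hom-flexibility and the vanishing cyclic associator sum (Lemmas \ref{lem:flexible} and \ref{lem:cyclichomass}); you instead expand everything in terms of $\bullet$ and $\{,\}$ and extract the axioms from permutation combinations of \eqref{admissibility}. Your direction deriving admissibility from the Hom-Poisson axioms is in fact slightly cleaner than the paper's Lemma \ref{papoisson}, since by expanding the right-hand side of \eqref{admissibility} as well you avoid the auxiliary identity \eqref{xyz3}; I checked the cancellation of the two mixed pieces via the two forms of Hom-Leibniz, the collapse of the pure bracket piece to $\{\{x,z\},\alpha(y)\}$, and the reduction of the right-hand side to the same expression, and they are all correct.

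There is, however, one step in the other direction that does not work as stated. Antisymmetrizing \eqref{admissibility} in $x \leftrightarrow z$ does give Hom-flexibility, but expanding $as_A(x,y,z) + as_A(z,y,x) = 0$ in terms of $\bullet$ and $\{,\}$ does \emph{not} leave the Hom-Leibniz identity. After the pure-$\bullet$ and pure-bracket pieces cancel, what remains is
\[
\{x\bullet y,\alpha(z)\} + \{y\bullet z,\alpha(x)\} + \{x,y\}\bullet\alpha(z) - \{y,z\}\bullet\alpha(x) = 0,
\]
which is $L(x,y,z) + L(y,z,x) = 0$, where $L(x,y,z) = \{x\bullet y,\alpha(z)\} - \{x,z\}\bullet\alpha(y) - \alpha(x)\bullet\{y,z\}$ is the Hom-Leibniz defect; note the extra term $\{x\bullet y,\alpha(z)\}$ and the missing term $\{x,z\}\bullet\alpha(y)$ compared with the Leibniz identity itself. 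The gap is fillable in one line: iterating $L(x,y,z) = -L(y,z,x)$ three times around the cycle gives $L(x,y,z) = -L(x,y,z)$, hence $L = 0$ in characteristic $0$. (The paper sidesteps this by proving the six-term identity \eqref{paleibniz}, whose right-hand side is a sum of three flexibility pairs.) With that repair, your remaining steps --- the cyclic-sum relation $J + \sum_{\mathrm{cyc}}\{x\bullet y,\alpha(z)\} = 0$, the vanishing of $\sum_{\mathrm{cyc}}\{x\bullet y,\alpha(z)\}$ once Leibniz is known, and the final reduction of \eqref{admissibility} to $4\bigl((x\bullet y)\bullet\alpha(z) - \alpha(x)\bullet(y\bullet z)\bigr) = 0$ --- all go through as you describe.
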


The proof will be given below.  Assuming Theorem \ref{thm:polar} for the moment, first we observe that the polarization function is actually a bijection from admissible Hom-Poisson algebras to Hom-Poisson algebras.  To prove this statement, we introduce the following function.

\begin{definition}
Let $(A,\{,\},\bullet,\alpha)$ be a quadruple in which $(A,\alpha)$ is a Hom-module and $\{,\},\, \bullet \colon A^{\otimes 2} \to A$ are binary operations.  Define the Hom-algebra
\begin{equation}
\label{pminusa}
P^-(A) = \left(A,\mu = \{,\}+\bullet, \alpha\right),
\end{equation}
called the \textbf{depolarization} of $A$.  We call $P^-$ the \textbf{depolarization function}.
\end{definition}

The following result says that there is a bijective correspondence between admissible Hom-Poisson algebras and Hom-Poisson algebras via polarization and depolarization.  It is the Hom-version of \cite{gr} (Proposition 3).

\begin{corollary}
\label{cor:polar}
The polarization and the depolarization functions
\[
P \colon \{\text{admissible Hom-Poisson algebras}\} \rightleftarrows \{\text{Hom-Poisson algebras}\} \colon P^-
\]
are the inverses of each other.
\end{corollary}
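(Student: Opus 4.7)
The plan is to deduce the corollary from Theorem \ref{thm:polar} together with the direct verification that $P$ and $P^-$ are set-theoretically inverse operations at the level of quadruples and Hom-algebras, without worrying yet about which structure axioms hold.

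First I would check that $P^- \circ P = \mathrm{Id}$ on the class of all Hom-algebras. Given $(A,\mu,\alpha)$, the polarization produces $\{,\} = \tfrac{1}{2}(\mu - \muop)$ and $\bullet = \tfrac{1}{2}(\mu + \muop)$, so applying $P^-$ gives $\{,\} + \bullet = \tfrac{1}{2}(\mu - \muop) + \tfrac{1}{2}(\mu + \muop) = \mu$, with the twisting map $\alpha$ unchanged. Next I would check $P \circ P^- = \mathrm{Id}$ on the class of quadruples $(A,\{,\},\bullet,\alpha)$ in which $\{,\}$ is anti-symmetric and $\bullet$ is commutative. For such a quadruple, set $\mu = \{,\} + \bullet$; then $\muop = -\{,\} + \bullet$, hence $\tfrac{1}{2}(\mu - \muop) = \{,\}$ and $\tfrac{1}{2}(\mu + \muop) = \bullet$, recovering the original quadruple.

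With these two formal identities in hand, the corollary follows by noting that both classes between which the maps are supposed to be bijections automatically land in the correct ambient setting. On one side, an admissible Hom-Poisson algebra is a Hom-algebra $(A,\mu,\alpha)$, and the polarization $P(A)$ is by construction a quadruple whose first binary operation is anti-symmetric and whose second is commutative, so $P \circ P^-$ and $P^- \circ P$ apply as above. Theorem \ref{thm:polar} then tells us $P$ sends admissible Hom-Poisson algebras into Hom-Poisson algebras. Conversely, if $(B,\{,\},\bullet,\alpha)$ is a Hom-Poisson algebra (so in particular $\{,\}$ is anti-symmetric and $\bullet$ is commutative), then $P(P^-(B)) = B$ is a Hom-Poisson algebra, and the ``only if'' direction of Theorem \ref{thm:polar} forces $P^-(B)$ to be an admissible Hom-Poisson algebra. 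Thus $P^-$ sends Hom-Poisson algebras into admissible Hom-Poisson algebras.

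Combining the two inclusions with the identities $P \circ P^- = \mathrm{Id}$ and $P^- \circ P = \mathrm{Id}$ gives the claimed bijective correspondence. The only nontrivial ingredient is Theorem \ref{thm:polar}, which is already established; the rest of the argument is purely bookkeeping about polarization versus depolarization, so I do not anticipate any real obstacle. The one subtlety to keep explicit is the observation that the image of $P$ automatically satisfies the anti-symmetry and commutativity hypotheses needed to apply $P^-$ meaningfully, and that Hom-Poisson algebras satisfy them by definition; this is what makes the two maps well-defined between the stated classes rather than merely between larger ambient categories.
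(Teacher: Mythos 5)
Your proposal is correct and follows essentially the same route as the paper: both verify the formal identities $P^-\circ P = \mathrm{Id}$ and $P\circ P^- = \mathrm{Id}$ (the latter using the anti-symmetry of $\{,\}$ and commutativity of $\bullet$), and both invoke the ``only if'' direction of Theorem \ref{thm:polar} applied to $P(P^-(B)) = B$ to conclude that $P^-$ lands in admissible Hom-Poisson algebras. No gaps.
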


\begin{proof}
If $(A,\mu,\alpha)$ is an admissible Hom-Poisson algebra, then $P(A)$ is a Hom-Poisson algebra by Theorem \ref{thm:polar}.  We have $P^-(P(A)) = A$ because
\[
\mu = \frac{1}{2}(\mu - \muop) + \frac{1}{2}(\mu + \muop).
\]
Conversely, suppose $(A,\{,\},\bullet,\alpha)$ is a Hom-Poisson algebra.  To see that $P^-(A)$ is an admissible Hom-Poisson algebra, note that the anti-symmetry of $\{,\}$ and the commutativity of $\bullet$ imply that
\[
\begin{split}
\{,\} &= \frac{1}{2}\left((\{,\} + \bullet) - (\{,\} + \bullet)^{op}\right),\\
\bullet &= \frac{1}{2}\left((\{,\} + \bullet) + (\{,\} + \bullet)^{op}\right).
\end{split}
\]
So the Hom-algebra $P^-(A)$ has the property that $P(P^-(A)) = A$, which is a Hom-Poisson algebra.  It follows from Theorem \ref{thm:polar} that $P^-(A)$ is an admissible Hom-Poisson algebra.  Since $P^-P$ and $PP^-$ are both identity functions, $P$ and $P^-$ are the inverses of each other.
\end{proof}

It should be noted that both the polarization and the depolarization functions preserve multiplicativity.  So the polarization of a multiplicative admissible Hom-Poisson algebra is a multiplicative Hom-Poisson algebra.  Conversely, the depolarization of a multiplicative Hom-Poisson algebra is a multiplicative admissible Hom-Poisson algebra.

We now prove Theorem \ref{thm:polar} with a series of Lemmas.  To prove the ``if" part of Theorem \ref{thm:polar}, we need some preliminary results.  The following observation says that admissible Hom-Poisson algebras are Hom-flexible.  It is the Hom-version of \cite{gr} (Proposition 4).  The notion of Hom-flexibility is a Hom-type generalization of the usual definition of flexibility and was first introduced in \cite{ms}.

\begin{lemma}
\label{lem:flexible}
Every admissible Hom-Poisson algebra $(A,\mu,\alpha)$ is Hom-flexible, i.e.,
\begin{equation}
\label{homflexible}
as_A(x,y,z) + as_A(z,y,x) = 0
\end{equation}
for all $x,y,z \in A$.
\end{lemma}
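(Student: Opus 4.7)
The plan is to prove Hom-flexibility by direct substitution into the admissibility identity \eqref{admissibility}. Specifically, I will compute $as_A(x,y,z) + as_A(z,y,x)$ by applying the defining identity of an admissible Hom-Poisson algebra to each summand separately.

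First, I write out
\[
as_A(x,y,z) = \tfrac{1}{3}\bigl\{(xz)\alpha(y) - (zx)\alpha(y) + (yz)\alpha(x) - (yx)\alpha(z)\bigr\}
\]
directly from \eqref{admissibility}. Then I swap the roles of $x$ and $z$ (leaving $y$ fixed) to obtain
\[
as_A(z,y,x) = \tfrac{1}{3}\bigl\{(zx)\alpha(y) - (xz)\alpha(y) + (yx)\alpha(z) - (yz)\alpha(x)\bigr\}.
\]
Comparing the two right-hand sides term by term, the first two terms of $as_A(z,y,x)$ are the negatives of the first two terms of $as_A(x,y,z)$, and similarly for the last two terms. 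Summing therefore gives $0$, which is \eqref{homflexible}.

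I do not expect any real obstacle here: the identity \eqref{admissibility} is visibly anti-symmetric in $x$ and $z$ because each of the four bracketed terms flips sign under the interchange $x \leftrightarrow z$. Hom-flexibility is just the statement that $as_A$ is anti-symmetric in its first and third arguments, so the lemma is essentially a tautological consequence of the admissibility axiom. The only thing to verify carefully is that no commutativity of $\mu$ is needed in this step, which is clear from the computation above.
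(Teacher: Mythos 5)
Your proof is correct and is essentially the paper's own argument: the paper likewise notes that the right-hand side of \eqref{admissibility} is anti-symmetric under the interchange $x \leftrightarrow z$, so the two associators cancel. You have simply written out the term-by-term verification that the paper leaves implicit.
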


\begin{proof}
The required identity \eqref{homflexible} follows immediately from the defining identity \eqref{admissibility}, in which the right-hand side is anti-symmetric in $x$ and $z$.
\end{proof}

Next we observe that in an admissible Hom-Poisson algebra the cyclic sum of the Hom-associator is trivial.

\begin{lemma}
\label{lem:cyclichomass}
Let $(A,\mu,\alpha)$ be an admissible Hom-Poisson algebra.  Then
\begin{equation}
\label{S}
S_A(x,y,z) \defn as_A(x,y,z) + as_A(z,x,y) + as_A(y,z,x) = 0
\end{equation}
for all $x,y,z \in A$.
\end{lemma}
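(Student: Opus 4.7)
The plan is to prove $S_A = 0$ by direct substitution of the admissibility identity \eqref{admissibility} into each of the three terms of the cyclic sum and observing pointwise cancellation. There is no need for the Hom-flexibility of Lemma \ref{lem:flexible} or any deeper structure; the defining relation of an admissible Hom-Poisson algebra already expresses $as_A(x,y,z)$ as a sum of four terms of the form $(\cdot\,\cdot)\alpha(\cdot)$, and applying a cyclic rotation of the arguments turns each such term into another one of the same shape but with a different permutation of $x,y,z$.

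Concretely, I would write out $as_A(z,x,y)$ and $as_A(y,z,x)$ by replacing $(x,y,z)$ in \eqref{admissibility} by $(z,x,y)$ and $(y,z,x)$ respectively, obtaining
\begin{align*}
3\,as_A(x,y,z) &= (xz)\alpha(y) - (zx)\alpha(y) + (yz)\alpha(x) - (yx)\alpha(z),\\
3\,as_A(z,x,y) &= (zy)\alpha(x) - (yz)\alpha(x) + (xy)\alpha(z) - (xz)\alpha(y),\\
3\,as_A(y,z,x) &= (yx)\alpha(z) - (xy)\alpha(z) + (zx)\alpha(y) - (zy)\alpha(x).
\end{align*}
Then I would simply inspect the six monomial types $(xy)\alpha(z)$, $(yx)\alpha(z)$, $(xz)\alpha(y)$, $(zx)\alpha(y)$, $(yz)\alpha(x)$, $(zy)\alpha(x)$ and note that each appears exactly once with a $+$ sign and once with a $-$ sign across the three expansions. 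Summing the three lines therefore yields $0$, and dividing by $3$ (permitted since $\mathrm{char}(\bk)=0$) gives $S_A(x,y,z)=0$.

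There is essentially no obstacle here: the result is a purely combinatorial cancellation built into the way the admissibility identity is stated, namely that its right-hand side is the commutator-like combination $[(xz)-(zx)]\alpha(y) + [(yz)\alpha(x)-(yx)\alpha(z)]$, whose two pieces are respectively anti-symmetric in $(x,z)$ and turn into each other's cyclic images. The only care needed is bookkeeping of signs and of which pair of arguments is multiplied under $\mu$ versus passed through $\alpha$; writing all twelve terms in a single display and matching them up pairwise makes the cancellation transparent.
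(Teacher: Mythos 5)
Your proof is correct, and it is a genuinely more streamlined route than the one in the paper. The paper's argument first rearranges the four terms of \eqref{admissibility} to recognize $as_A(x,y,z) = -as_A(z,x,y) + as_A(x,z,y)$, and then invokes Hom-flexibility (Lemma \ref{lem:flexible}) to convert $as_A(x,z,y)$ into $-as_A(y,z,x)$, arriving at $S_A=0$. You instead expand all three cyclic terms of $S_A$ via \eqref{admissibility} and observe that the twelve resulting monomials cancel in pairs; I have checked that each of the six monomial types $(xy)\alpha(z)$, $(yx)\alpha(z)$, $(xz)\alpha(y)$, $(zx)\alpha(y)$, $(yz)\alpha(x)$, $(zy)\alpha(x)$ indeed occurs exactly once with each sign across your three displayed lines, so $3S_A(x,y,z)=0$ and the conclusion follows since $\mathrm{char}(\bk)=0$. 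Both proofs are ultimately the same twelve-term cancellation, but yours is self-contained and makes no use of Lemma \ref{lem:flexible}, whereas the paper's version packages part of the cancellation into the flexibility identity (which is itself an instance of the antisymmetry of the right-hand side of \eqref{admissibility} in $x$ and $z$). What the paper's phrasing buys is the intermediate identity $as_A(x,y,z) = -as_A(z,x,y)+as_A(x,z,y)$, which foreshadows the associator bookkeeping used later in Lemmas \ref{lem:adhomass} and \ref{papoisson}; what your version buys is transparency and independence from the flexibility lemma.
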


\begin{proof}
Using the defining identity \eqref{admissibility}, we have:
\[
\begin{split}
as_A(x,y,z)
&= \frac{1}{3}\left((yz)\alpha(x) + (xz)\alpha(y) - (yx)\alpha(z) - (zx)\alpha(y)\right)\\
&= -\frac{1}{3}\left((zy)\alpha(x) - (yz)\alpha(x) + (xy)\alpha(z) - (xz)\alpha(y)\right)\\
&\relphantom{} + \frac{1}{3}\left((xy)\alpha(z) - (yx)\alpha(z) + (zy)\alpha(x) - (zx)\alpha(y)\right)\\
&= - as_A(z,x,y) + as_A(x,z,y)\\
&= - as_A(z,x,y) - as_A(y,z,x).
\end{split}
\]
The last equality above follows from Hom-flexibility (Lemma \ref{lem:flexible}). Therefore, we conclude that $S_A = 0$.
\end{proof}

Next we show that the polarization of an admissible Hom-Poisson algebra is commutative Hom-associative.

\begin{lemma}
\label{lem:adhomass}
Let $(A,\mu,\alpha)$ be an admissible Hom-Poisson algebra.  Then
\[
\left(A,\bullet = \frac{1}{2}(\mu + \muop),\alpha\right)
\]
is a commutative Hom-associative algebra.
\end{lemma}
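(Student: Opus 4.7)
The plan is to verify the two defining properties of a commutative Hom-associative algebra for $(A,\bullet,\alpha)$. Commutativity of $\bullet$ is immediate from its definition, since $\bullet^{op} = \tfrac{1}{2}(\mu^{op}+\mu) = \bullet$. So the real content is Hom-associativity: I must show that $(x\bullet y)\bullet \alpha(z) = \alpha(x)\bullet (y\bullet z)$ for all $x,y,z \in A$.

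I would expand the associator of $\bullet$ using $\bullet = \tfrac{1}{2}(\mu+\mu^{op})$, obtaining
\[
4\bigl[(x\bullet y)\bullet \alpha(z) - \alpha(x)\bullet (y\bullet z)\bigr] \;=\; N,
\]
where $N$ is a sum of eight terms, each of the form $(ab)\alpha(c)$ or $\alpha(a)(bc)$. Grouping the eight terms into four pairs of the shape $\mu(\mu(\cdot,\cdot),\alpha(\cdot)) - \mu(\alpha(\cdot),\mu(\cdot,\cdot))$, two of the pairs are literally $as_A(x,y,z)$ and $-as_A(z,y,x)$; by the Hom-flexibility established in Lemma \ref{lem:flexible} these combine to $2\, as_A(x,y,z)$. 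The remaining two pairs,
\[
(yx)\alpha(z) - \alpha(x)(zy) \quad\text{and}\quad \alpha(z)(xy) - (yz)\alpha(x),
\]
are not themselves Hom-associators of $A$, so the cleanest strategy is to use the admissibility identity \eqref{admissibility} to convert each $\alpha(\cdot)(\cdot\cdot)$ appearing anywhere in $N$ into an explicit combination of $(\cdot\cdot)\alpha(\cdot)$ terms (one such substitution, via the appropriate permutation of $(x,y,z)$ in \eqref{admissibility}, for each of $\alpha(x)(yz)$, $\alpha(x)(zy)$, $\alpha(z)(xy)$, $\alpha(z)(yx)$).

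After these substitutions, $N$ is expressed entirely as a $\mathbf{k}$-linear combination of the six monomials $(xy)\alpha(z),(xz)\alpha(y),(yx)\alpha(z),(yz)\alpha(x),(zx)\alpha(y),(zy)\alpha(x)$, and the claim reduces to checking that every coefficient is zero. The main obstacle is purely a bookkeeping one: one must keep track of four substitutions, each contributing five terms with fractional coefficients $\pm\tfrac13$, and verify cancellation across all six monomials. There is no further conceptual input, although one could shorten the bookkeeping slightly by invoking the cyclic identity $S_A = 0$ of Lemma \ref{lem:cyclichomass} in tandem with Hom-flexibility to pre-reduce the pair $(yx)\alpha(z)-\alpha(x)(zy)$ and $\alpha(z)(xy)-(yz)\alpha(x)$ before applying admissibility. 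Either way, the verification is routine and yields $N=0$, completing the proof.
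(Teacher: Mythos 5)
Your proposal is correct, and I checked that the final cancellation it defers to actually comes out: after replacing each of $\alpha(z)(xy)$, $\alpha(z)(yx)$, $\alpha(x)(yz)$, $\alpha(x)(zy)$ in the eight-term expansion of $4\,as_{P(A)}(x,y,z)$ by the expression that \eqref{admissibility} provides, the coefficients of all six monomials $(xy)\alpha(z),\dots,(zy)\alpha(x)$ do vanish, so $N=0$. The starting point is the same as the paper's (expand the $\bullet$-associator into eight $\mu$-monomials), but the way you finish is genuinely different. The paper reorganizes the eight terms so that six of them assemble, via \eqref{admissibility} and Hom-flexibility (Lemma \ref{lem:flexible}), into $-as_A(x,y,z)$, and the remaining two become $-as_A(z,x,y)+as_A(x,z,y)$, which it kills using Hom-flexibility together with the cyclic-sum identity $S_A=0$ of Lemma \ref{lem:cyclichomass}; so the paper's proof leans on both auxiliary lemmas and stays at the level of Hom-associators throughout. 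Your primary route is a normal-form computation: eliminate every left-twisted monomial $\alpha(\cdot)(\cdot\,\cdot)$ via \eqref{admissibility} and check coefficients. That costs more bookkeeping (four substitutions, five terms each) but buys independence from Lemma \ref{lem:cyclichomass} entirely --- and, in your ``cleanest'' version, even from Lemma \ref{lem:flexible} --- which is a small structural simplification of the logical dependencies in this section. The variant you mention at the end, pre-reducing two of the pairs with flexibility and $S_A=0$, essentially recovers the paper's argument.
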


\begin{proof}
It is obvious that $\bullet = (\mu + \muop)/2$ is commutative. To show that the Hom-associator
\[
as_{P(A)} = \bullet (\bullet \otimes \alpha - \alpha \otimes \bullet)
\]is trivial, pick $x,y,z \in A$.  We write $\mu$ using juxtaposition of elements in $A$.  Expanding $as_{P(A)}$ in terms of $\mu$, we have:
\begin{equation}
\label{4as}
\begin{split}
4as_{P(A)}(x,y,z) &= (xy)\alpha(z) + (yx)\alpha(z) + \alpha(z)(xy) + \alpha(z)(yx)\\
&\relphantom{} - \alpha(x)(yz) - \alpha(x)(zy) - (yz)\alpha(x) - (zy)\alpha(x)\\
&= as_{A}(x,y,z) - as_{A}(z,y,x) + (yx)\alpha(z) - (yz)\alpha(x)\\
&\relphantom{} - as_A(z,x,y) + (zx)\alpha(y) + as_A(x,z,y) - (xz)\alpha(y)
\end{split}
\end{equation}
Using \eqref{admissibility} and Hom-flexibility (Lemma \ref{lem:flexible}), we can combine six of the eight terms above as follows:
\begin{equation}
\label{4as1}
\begin{split}
as_{A}(x,y,z) & - as_{A}(z,y,x) + (yx)\alpha(z) - (yz)\alpha(x) + (zx)\alpha(y) - (xz)\alpha(y)\\
&= as_A(x,y,z) + as_A(x,y,z) - 3as_A(x,y,z)\\
&= -as_A(x,y,z)
\end{split}
\end{equation}
By Lemmas \ref{lem:flexible} and \ref{lem:cyclichomass}, the other two terms in the last line in \eqref{4as} become:
\begin{equation}
\label{4as2}
\begin{split}
- as_A(z,x,y) + as_A(x,z,y)
&= - as_A(z,x,y) - as_A(y,z,x)\\
&= as_A(x,y,z).
\end{split}
\end{equation}
Using \eqref{4as1} and \eqref{4as2} in \eqref{4as}, we conclude that $as_{P(A)} = 0$.
\end{proof}

Now we observe that the polarization of an admissible Hom-Poisson algebra is a Hom-Lie algebra.

\begin{lemma}
\label{lem:adhomlie}
Let $(A,\mu,\alpha)$ be a Hom-algebra.  Then
\begin{equation}
\label{4j}
\begin{split}
4J_{P(A)}(x,y,z) &= as_A(x,y,z) + as_A(z,x,y) + as_A(y,z,x)\\
&\relphantom{} - as_A(y,x,z) - as_A(z,y,x) - as_A(x,z,y)
\end{split}
\end{equation}
for all $x,y,z \in A$, where $J_{P(A)}$ is the Hom-Jacobian \eqref{homjacobian} of the polarization $P(A)$ \eqref{pa}.  Moreover, if $A$ is an admissible Hom-Poisson algebra, then
\[
\left(A,\{,\} = \frac{1}{2}(\mu - \muop),\alpha\right)
\]
is a Hom-Lie algebra.
\end{lemma}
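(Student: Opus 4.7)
The plan is to prove the identity \eqref{4j} by a direct, purely formal expansion, and then deduce the Hom-Jacobi identity for $P(A)$ from Lemma \ref{lem:cyclichomass}.

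First I would expand
\[
J_{P(A)}(x,y,z) = \{\{x,y\},\alpha(z)\} + \{\{z,x\},\alpha(y)\} + \{\{y,z\},\alpha(x)\}
\]
using the definition $\{a,b\} = \tfrac{1}{2}(ab - ba)$, with juxtaposition denoting $\mu$. Each of the three cyclic summands produces four monomials in $\mu$ (two of the form $(\cdot\,\cdot)\alpha(\cdot)$ and two of the form $\alpha(\cdot)(\cdot\,\cdot)$), for a total of twelve terms in $4 J_{P(A)}(x,y,z)$. On the other side, each of the six Hom-associators $as_A(\cdot,\cdot,\cdot)$ appearing in \eqref{4j} unpacks into two monomials via \eqref{homassociator}, again yielding twelve terms. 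Matching the six products of the form $(\cdot\,\cdot)\alpha(\cdot)$ and the six of the form $\alpha(\cdot)(\cdot\,\cdot)$ with the correct signs dictated by the bracket gives the identity \eqref{4j}. No use of admissibility is needed at this stage.

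For the Hom-Lie algebra assertion, anti-symmetry of $\{,\} = \tfrac{1}{2}(\mu - \muop)$ is immediate. For the Hom-Jacobi identity, I would observe that the right-hand side of \eqref{4j} is exactly
\[
S_A(x,y,z) - S_A(y,x,z),
\]
where $S_A$ is the cyclic sum of the Hom-associator defined in \eqref{S}. When $A$ is an admissible Hom-Poisson algebra, Lemma \ref{lem:cyclichomass} yields $S_A \equiv 0$, so both terms vanish and $J_{P(A)} = 0$.

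I do not expect any genuine obstacle. The identity \eqref{4j} is formal and holds in every Hom-algebra; the main task is careful bookkeeping of the twelve terms on each side. The Hom-Jacobi identity then drops out from the already-established vanishing of the cyclic Hom-associator sum in admissible Hom-Poisson algebras.
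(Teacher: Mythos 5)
Your proposal is correct and follows essentially the same route as the paper: expand $4J_{P(A)}$ into twelve $\mu$-monomials, regroup them as the six signed Hom-associators in \eqref{4j}, then recognize the right-hand side as $S_A(x,y,z) - S_A(y,x,z)$ and invoke Lemma \ref{lem:cyclichomass} to conclude $J_{P(A)}=0$ in the admissible case. No gaps.
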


\begin{proof}
Since $\{,\} = (\mu - \muop)/2$, one can expand
\[
4J_{P(A)} = 4\{,\}(\{,\} \otimes \alpha)(Id + \sigma + \sigma^2)
\]
in terms of $\mu$.  The resulting twelve terms are then written in terms of the Hom-associator $as_A$.  The result is the right-hand side of \eqref{4j}.

For the second assertion, first note that $\{,\}$ is clearly anti-symmetric.  Next observe that the identity \eqref{4j} can be rewritten as
\[
4J_{P(A)}(x,y,z) = S_A(x,y,z) - S_A(y,x,z),
\]
where $S_A$ is the cyclic sum of the Hom-associator defined in \eqref{S}.  Since $S_A = 0$ in an admissible Hom-Poisson algebra (Lemma \ref{lem:cyclichomass}), we conclude that $J_{P(A)} = 0$.  In other words, $(A,\{,\},\alpha)$ satisfies the Hom-Jacobi identity.
\end{proof}

The following result says that the polarization of an admissible Hom-Poisson algebra satisfies the Hom-Leibniz identity \eqref{homleibniz'}.

\begin{lemma}
\label{lem:adhomleibniz}
Let $(A,\mu,\alpha)$ be a Hom-algebra.  Then the polarization $P(A)$ satisfies
\begin{equation}
\label{paleibniz}
\begin{split}
4 & \left(\{\alpha(x), y \bullet z\} - \{x,y\} \bullet \alpha(z) - \alpha(y) \bullet \{x,z\}\right)\\
&= as_A(x,y,z) + as_A(z,y,x) + as_A(x,z,y) + as_A(y,z,x)\\
&\relphantom{}  - as_A(y,x,z) - as_A(z,x,y)
\end{split}
\end{equation}
for all $x,y,z \in A$.  Moreover, if $A$ is an admissible Hom-Poisson algebra, then the polarization $P(A)$ satisfies the Hom-Leibniz identity.
\end{lemma}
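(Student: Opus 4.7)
My plan is to establish \eqref{paleibniz} by a direct expansion of the left-hand side and then to deduce the Hom-Leibniz identity for $P(A)$ from the admissibility hypothesis together with the auxiliary identities already proved.

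First, I would substitute $\{u,v\} = \tfrac12(uv - vu)$ and $u \bullet v = \tfrac12(uv + vu)$ into the left-hand side of \eqref{paleibniz}. Each of the three summands $4\{\alpha(x), y \bullet z\}$, $-4\{x,y\}\bullet \alpha(z)$, and $-4\alpha(y)\bullet \{x,z\}$ then unfolds into four signed $\mu$-monomials of the form $\pm(ab)\alpha(c)$ or $\pm\alpha(a)(bc)$; for instance,
\[
4\{\alpha(x), y \bullet z\} = \alpha(x)(yz) + \alpha(x)(zy) - (yz)\alpha(x) - (zy)\alpha(x),
\]
and the other two summands expand analogously. This produces twelve $\mu$-monomials, which fall naturally into six pairs indexed by the six orderings of $(x,y,z)$. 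Each pair has the form $\pm[(ab)\alpha(c) - \alpha(a)(bc)] = \pm as_A(a,b,c)$, and a careful tally of signs shows the total matches the right-hand side of \eqref{paleibniz}, proving the first assertion.

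For the ``moreover'' part, assume $A$ is an admissible Hom-Poisson algebra. Lemma \ref{lem:cyclichomass} gives the cyclic-sum identities $S_A(x,y,z) = 0 = S_A(y,x,z)$, and Lemma \ref{lem:flexible} gives Hom-flexibility $as_A(a,b,c) + as_A(c,b,a) = 0$. A short linear combination of these two families of identities makes the six Hom-associators on the right-hand side of \eqref{paleibniz} cancel, so the left-hand side vanishes as well. Since the scalar $4 \neq 0$ in characteristic $0$, this is precisely the Hom-Leibniz identity for $P(A)$. The only real obstacle in the argument is the sign bookkeeping in the expansion step; once all twelve monomials are correctly sorted into their six Hom-associator pairs, the second assertion is a routine piece of linear algebra in the six-dimensional span of the $as_A(\sigma(x),\sigma(y),\sigma(z))$ modulo the cyclic-sum and flexibility relations.
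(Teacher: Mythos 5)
Your proposal is correct and follows essentially the same route as the paper: expand the twelve $\mu$-monomials coming from $\{,\}=\tfrac12(\mu-\mu^{op})$ and $\bullet=\tfrac12(\mu+\mu^{op})$, regroup them into the six Hom-associators, and then observe that the right-hand side of \eqref{paleibniz} vanishes for admissible Hom-Poisson algebras. The only (harmless) deviation is that you also invoke Lemma \ref{lem:cyclichomass}, which is superfluous here: Hom-flexibility alone already pairs the six terms into the three cancelling sums $as_A(x,y,z)+as_A(z,y,x)$, $as_A(x,z,y)+as_A(y,z,x)$, and $-\bigl(as_A(y,x,z)+as_A(z,x,y)\bigr)$, which is all the paper uses.
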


\begin{proof}
Since $\{,\} = (\mu - \muop)/2$ and $\bullet = (\mu + \muop)/2$, the left-hand side of \eqref{paleibniz} can be expanded in terms of $\mu$ into twelve terms.  The result can be written in terms of the Hom-associator $as_A$, which turns out to be the right-hand side of \eqref{paleibniz}.

For the second assertion, suppose that $A$ is an admissible Hom-Poisson algebra.  Then Hom-flexibility (Lemma \ref{lem:flexible}) implies that the right-hand side of \eqref{paleibniz} is $0$.  We conclude that
\[
\{\alpha(x), y \bullet z\} = \{x,y\} \bullet \alpha(z) + \alpha(y) \bullet \{x,z\},
\]
which is the Hom-Leibniz identity in the polarization $P(A)$.
\end{proof}

Next we show that only admissible Hom-Poisson algebras can give rise to Hom-Poisson algebras via polarization.

\begin{lemma}
\label{papoisson}
Let $(A,\mu,\alpha)$ be a Hom-algebra such that the polarization $P(A)$ is a Hom-Poisson algebra.  Then $A$ is an admissible Hom-Poisson algebra.
\end{lemma}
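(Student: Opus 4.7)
The plan is to expand both sides of the admissibility identity \eqref{admissibility} in terms of the polarization operations $\{,\} = \tfrac{1}{2}(\mu-\muop)$ and $\bullet = \tfrac{1}{2}(\mu+\muop)$, which recover $\mu$ via $\mu = \{,\} + \bullet$, and then exploit the three axioms of the Hom-Poisson algebra $P(A)$ (commutative Hom-associativity of $\bullet$, Hom-Jacobi for $\{,\}$, and Hom-Leibniz relating $\{,\}$ and $\bullet$) to match them. Concretely, for any $u,v \in A$ one has $uv = u\bullet v + \{u,v\}$, and by anti-symmetry of $\{,\}$ and commutativity of $\bullet$ also $vu = u\bullet v - \{u,v\}$.

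First I would compute $as_A(x,y,z) = (xy)\alpha(z) - \alpha(x)(yz)$. Expanding each factor, $(xy)\alpha(z)$ and $\alpha(x)(yz)$ each break into four terms of type (pure $\bullet$), ($\bullet$ of bracket), (bracket of $\bullet$), and (bracket of bracket). The pure $\bullet$ terms cancel by Hom-associativity of $\bullet$. The two ``mixed'' differences
\[
\{x\bullet y,\alpha(z)\} - \alpha(x)\bullet\{y,z\} \qquad\text{and}\qquad \{x,y\}\bullet\alpha(z) - \{\alpha(x),y\bullet z\}
\]
simplify, using the Hom-Leibniz identity \eqref{homleibniz'} in $P(A)$ (and its equivalent anti-symmetric form), to $\{x,z\}\bullet\alpha(y)$ and $-\alpha(y)\bullet\{x,z\}$ respectively; these cancel by commutativity of $\bullet$. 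We are left with $as_A(x,y,z) = \{\{x,y\},\alpha(z)\} - \{\alpha(x),\{y,z\}\}$, which by the Hom-Jacobi identity equals $\{\{x,z\},\alpha(y)\}$.

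Next I would expand the right-hand side of \eqref{admissibility} analogously. The part $(xz)\alpha(y)-(zx)\alpha(y)$ equals $2\{x,z\}\cdot\alpha(y) = 2\,\alpha(y)\bullet\{x,z\} + 2\{\{x,z\},\alpha(y)\}$. Expanding $(yz)\alpha(x)-(yx)\alpha(z)$ in the same four-term fashion, the pure $\bullet$ pieces cancel by Hom-associativity and commutativity of $\bullet$, while the mixed pieces collapse under Hom-Leibniz to $-2\,\alpha(y)\bullet\{x,z\}$, leaving the bracket-of-bracket contribution $\{\{y,z\},\alpha(x)\} + \{\{x,y\},\alpha(z)\}$, which by Hom-Jacobi is $\{\{x,z\},\alpha(y)\}$. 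Summing and dividing by $3$ yields exactly $\{\{x,z\},\alpha(y)\}$, matching $as_A(x,y,z)$. Thus \eqref{admissibility} holds and $A$ is admissible Hom-Poisson.

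The main obstacle is purely bookkeeping: each side produces roughly a dozen terms of four different structural types, and one must route each term through the correct axiom (Hom-associativity of $\bullet$, Hom-Leibniz, or Hom-Jacobi) while tracking signs coming from the anti-symmetry of $\{,\}$. No single step is deep; the discipline is in organizing the expansion so that the cancellations are visible. A clean presentation would parallel the identities \eqref{4as1}--\eqref{4as2} and \eqref{4j}--\eqref{paleibniz} used in the previous lemmas, but run in reverse: instead of recovering Hom-Poisson axioms from admissibility, one uses them to produce admissibility.
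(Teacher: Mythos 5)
Your proof is correct, and it diverges from the paper's in its second half. The first half coincides with the paper's argument: both reduce $as_A(x,y,z)$ to $\{\{x,z\},\alpha(y)\}$ by expanding the Hom-associator through $\mu = \{,\}+\bullet$ and killing the pure-$\bullet$ part by Hom-associativity, the mixed part by Hom-Leibniz plus commutativity, and rewriting the bracket-of-bracket part by Hom-Jacobi (this is exactly the paper's \eqref{xyz4}--\eqref{xyz5} step). From there the paper re-expands $4\{\{x,z\},\alpha(y)\}$ back into $\mu$-terms, recognizes two of them as Hom-associators, and closes the argument with the auxiliary identity \eqref{xyz3}, $as_A(x,y,z) = as_A(y,x,z) - as_A(y,z,x)$, which it must first manufacture by combining the associator formulas \eqref{4j} and \eqref{paleibniz} from Lemmas \ref{lem:adhomlie} and \ref{lem:adhomleibniz}. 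You instead polarize the right-hand side of \eqref{admissibility} as well and show it also collapses to $\{\{x,z\},\alpha(y)\}$, using only the same three axioms together with the full symmetry of $(a\bullet b)\bullet\alpha(c)$ (Lemma \ref{lem:homass}). I checked your cancellations: the mixed terms of $(yz)\alpha(x)-(yx)\alpha(z)$ do collapse under Hom-Leibniz to $-2\,\alpha(y)\bullet\{x,z\}$, the bracket-of-bracket contribution is $\{\{y,z\},\alpha(x)\}+\{\{x,y\},\alpha(z)\} = \{\{x,z\},\alpha(y)\}$ by Hom-Jacobi, and the total is $3\{\{x,z\},\alpha(y)\}$ as needed. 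What your route buys is independence from \eqref{4j}, \eqref{paleibniz}, and \eqref{xyz3} --- the lemma becomes self-contained given Lemma \ref{lem:homass} --- and a more symmetric presentation in which both sides of \eqref{admissibility} are evaluated in the same (polarized) coordinates; the cost is one additional full four-way expansion. The paper's route, by contrast, amortizes work already done for the ``if'' direction of Theorem \ref{thm:polar}.
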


\begin{proof}
We need to prove the identity \eqref{admissibility}.  Pick $x,y,z \in A$.  We will express the Hom-associator $as_A$ in several different forms and compare them.

On the one hand, the Hom-Jacobi identity $J_{P(A)} = 0$ and \eqref{4j} imply that
\begin{equation}
\label{xyz1}
\begin{split}
as_A(x,y,z) &= as_A(y,x,z) - as_A(y,z,x)\\
&\relphantom{} - as_A(z,x,y) + as_A(z,y,x) + as_A(x,z,y).
\end{split}
\end{equation}
Moreover, the Hom-Leibniz identity in $P(A)$ and \eqref{paleibniz} imply that
\begin{equation}
\label{xyz2}
\begin{split}
as_A(x,y,z) &= as_A(y,x,z) - as_A(y,z,x)\\
&\relphantom{} + as_A(z,x,y) - as_A(z,y,x) - as_A(x,z,y).
\end{split}
\end{equation}
Adding \eqref{xyz1} and \eqref{xyz2} and dividing the result by $2$, we obtain
\begin{equation}
\label{xyz3}
as_A(x,y,z) = as_A(y,x,z) - as_A(y,z,x),
\end{equation}
which we will use in a moment.

On the other hand, since $\mu = \{,\} + \bullet$, we can expand the Hom-associator $as_A$ in terms of $\{,\}$ and $\bullet$ as follows:
\begin{equation}
\label{xyz4}
\begin{split}
as_A(x,y,z)
&= (xy)\alpha(z) - \alpha(x)(yz)\\
&= \{\{x,y\},\alpha(z)\} + \{x \bullet y,\alpha(z)\} + \{x,y\} \bullet \alpha(z) + (x \bullet y) \bullet \alpha(z)\\
&\relphantom{} - \{\alpha(x),\{y,z\}\} - \{\alpha(x),y \bullet z\} - \alpha(x) \bullet \{y,z\} - \alpha(x) \bullet (y \bullet z)
\end{split}
\end{equation}
Since the polarization $P(A)$ is assumed to be a Hom-Poisson algebra, we have:
\begin{equation}
\label{xyz5}
\begin{split}
0 &= as_{P(A)}(x,y,z) = (x \bullet y) \bullet \alpha(z) - \alpha(x) \bullet (y \bullet z),\\
0 &= \{x,z\} \bullet \alpha(y) - \alpha(y) \bullet \{x,z\}\\
&= \{x \bullet y,\alpha(z)\} - \alpha(x) \bullet \{y,z\} - \{\alpha(x),y \bullet z\} + \{x,y\} \bullet \alpha(z),\\
\{\{x,z\},\alpha(y)\} &= \{\{x,y\},\alpha(z)\} - \{\alpha(x),\{y,z\}\}.
\end{split}
\end{equation}
Using the identities \eqref{xyz5} in \eqref{xyz4}, we obtain:
\[
\begin{split}
4as_A(x,y,z)
&= 4\{\{x,z\},\alpha(y)\}\\
&= (xz)\alpha(y) - (zx)\alpha(y) - \alpha(y)(xz) + \alpha(y)(zx)\\
&= (xz)\alpha(y) - (zx)\alpha(y) + as_A(y,x,z) - (yx)\alpha(z) - as_A(y,z,x) + (yz)\alpha(x)\\
&= (xz)\alpha(y) - (zx)\alpha(y) + (yz)\alpha(x) - (yx)\alpha(z) + as_A(x,y,z),
\end{split}
\]
where the last equality follows from \eqref{xyz3}.  Finally, subtracting $as_A(x,y,z)$ in the above calculation and dividing the result by $3$, we obtain the desired identity \eqref{admissibility}.
\end{proof}

\begin{proof}[Proof of Theorem \ref{thm:polar}]
If $A$ is an admissible Hom-Poisson algebra, then Lemmas \ref{lem:adhomass}, \ref{lem:adhomlie}, and \ref{lem:adhomleibniz} imply that the polarization $P(A)$ is a Hom-Poisson algebra.  The converse is Lemma \ref{papoisson}.
\end{proof}

\section{Hom-power associativity}
\label{sec:power}

The purpose of this section is to show that multiplicative admissible Hom-Poisson algebras are Hom-power associative.  Power associativity of admissible Poisson algebras is proved in \cite{gr} (Proposition 6).

Let us first recall the definition of a Hom-power associative algebra from \cite{yau15}.

\begin{definition}
\label{def:hompower}
Let $(A,\mu,\alpha)$ be a Hom-algebra, $x \in A$, and $n$ be a positive integer.
\begin{enumerate}
\item
The \textbf{$n$th Hom-power} $x^n \in A$ is defined inductively as
\begin{equation}
\label{hompower}
x^1 = x, \qquad
x^n = x^{n-1}\alpha^{n-2}(x)
\end{equation}
for $n \geq 2$.
\item
For positive integers $i$ and $j$, define
\begin{equation}
\label{xij}
x^{i,j} = \alpha^{j-1}(x^i) \alpha^{i-1}(x^j).
\end{equation}
\item
$A$ is called \textbf{$n$th Hom-power associative} if
\begin{equation}
\label{nhpa}
x^n = x^{n-i,i}
\end{equation}
for all $x \in A$ and $i \in \{1,\ldots, n-1\}$.
\item
$A$ is called \textbf{Hom-power associative} if $A$ is $n$th Hom-power associative for all $n \geq 2$.
\end{enumerate}
\end{definition}

By definition
\[
x^2 = xx, \quad x^n = x^{n-1,1}
\]
for all $n \geq 2$.

If the twisting map $\alpha$ is the identity map, then
\[
x^n = x^{n-1}x,\quad x^{i,j} = x^ix^j,
\]
and $n$th Hom-power associativity reduces to
\begin{equation}
\label{npa}
x^n = x^{n-i}x^i
\end{equation}
for all $x \in A$ and $i \in \{1,\ldots, n-1\}$.  Therefore,
Hom-powers and ($n$th) Hom-power associativity become Albert's right powers and ($n$th) power associativity \cite{albert1,albert2} if $\alpha = Id$.  Examples of Hom-power associative algebras include multiplicative right Hom-alternative algebras and non-commutative Hom-Jordan algebras.  Other results for Hom-power associative algebras can be found in \cite{yau15}.

By definition, power associativity  involves infinitely many defining identities, namely, \eqref{npa} for all $n$.  A well-known result of Albert \cite{albert1} says that an algebra $(A,\mu)$ is power associative if and only if it is third and fourth power associative, i.e., the condition \eqref{npa} holds for $n = 3$ and $4$.  Moreover, for \eqref{npa} to hold for $n = 3$ and $4$, it is necessary and sufficient that
\[
(xx)x = x(xx) \quad\text{and}\quad ((xx)x)x = (xx)(xx)
\]
for all $x \in A$.  The Hom-versions of these statements are also true.  More precisely, the author proved in \cite{yau15} that a multiplicative Hom-algebra $(A,\mu,\alpha)$ is Hom-power associative if and only if it is third and fourth Hom-power associative, which in turn is equivalent to
\begin{equation}
\label{34}
x^2\alpha(x) = \alpha(x)x^2 \quad\text{and}\quad
x^4 = \alpha(x^2)\alpha(x^2)
\end{equation}
for all $x \in A$.

The following result is the Hom-version of \cite{gr} (Proposition 6).

\begin{theorem}
\label{thm:power}
Every multiplicative admissible Hom-Poisson algebra is Hom-power associative.
\end{theorem}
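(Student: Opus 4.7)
The plan is to invoke the author's own criterion from \cite{yau15}, which reduces Hom-power associativity of a multiplicative Hom-algebra $(A,\mu,\alpha)$ to verifying just the two identities displayed in \eqref{34}, namely (i) $x^2\alpha(x) = \alpha(x)x^2$ and (ii) $x^4 = \alpha(x^2)\alpha(x^2)$ for every $x \in A$. Thus I only need to establish these two identities inside an arbitrary multiplicative admissible Hom-Poisson algebra.

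Identity (i) should be immediate. Substituting $y = z = x$ in the defining admissibility relation \eqref{admissibility}, the four terms on the right-hand side cancel pairwise (the first against the second, the third against the fourth), yielding $as_A(x,x,x) = 0$, which unpacks exactly to $x^2\alpha(x) - \alpha(x)x^2 = 0$.

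For identity (ii), the plan is to evaluate $as_A(x^2,\alpha(x),\alpha(x))$ in two different ways. Computing from the definition \eqref{homassociator}, and using multiplicativity to write $\alpha(x^2) = \alpha(x)\alpha(x)$, gives
\begin{equation*}
as_A(x^2,\alpha(x),\alpha(x)) = (x^2\alpha(x))\alpha^2(x) - \alpha(x^2)(\alpha(x)\alpha(x)) = x^4 - \alpha(x^2)\alpha(x^2).
\end{equation*}
On the other hand, applying the admissibility identity \eqref{admissibility} to the same triple produces four terms; two of them are literally equal, and after invoking identity (i) to replace $\alpha(x)x^2$ by $x^2\alpha(x) = x^3$, the three terms collapse to $\tfrac{1}{3}\bigl(\alpha(x^2)\alpha(x^2) - x^4\bigr)$. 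Equating the two expressions yields $4\bigl(x^4 - \alpha(x^2)\alpha(x^2)\bigr) = 0$, whence (ii) follows because $\operatorname{char}\bk = 0$.

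The conceptual step — and the place where one might first be stuck — is choosing the right arguments in the Hom-associator for step three. The triple $(x^2,\alpha(x),\alpha(x))$ is forced by two competing demands: the left slot must produce $x^4$ when read through the definition of $as_A$, while the repetition $b = c = \alpha(x)$ is what makes two of the four terms on the admissibility side coincide so that identity (i) can collapse them further. Once this substitution is made, the rest is bookkeeping. (An alternative route, which I considered but discarded as less direct, would be to pass through the polarization $P(A)$ of Corollary \ref{cor:polar} and exploit the fact that all Hom-powers of $x$ lie in the commutative Hom-associative subalgebra generated by $x$ under $\bullet$, since every Hom-Poisson bracket appearing is of the form $\{x,x\}$ or reduces to it via the Hom-Leibniz identity; but the two-way evaluation of a single Hom-associator is shorter.)
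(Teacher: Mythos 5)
Your proof is correct and follows essentially the same route as the paper: reduce to the two identities in \eqref{34} via the criterion of \cite{yau15}, obtain the first from the admissibility relation \eqref{admissibility} at $(x,x,x)$, and obtain the second by evaluating a single Hom-associator both from the definition \eqref{homassociator} and from \eqref{admissibility}. The only difference is your choice of the triple $(x^2,\alpha(x),\alpha(x))$ in place of the paper's $(\alpha(x),\alpha(x),x^2)$, which lets you read off $x^4$ directly from the first term of the Hom-associator and thereby skip the paper's auxiliary flexibility computation $as_A(\alpha(x),x^2,\alpha(x))=0$ (used there to identify $\alpha^2(x)(\alpha(x)x^2)$ with $x^4$); this is a mild streamlining rather than a genuinely different argument.
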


\begin{proof}
As discussed above, by a result in \cite{yau15} it suffices to prove the two equalities in \eqref{34}.  Hom-flexibility (Lemma \ref{lem:flexible}) implies that
\[
0 = as_A(x,x,x) = x^2\alpha(x) - \alpha(x)x^2,
\]
which proves the first identity in \eqref{34}.  To prove the other equality in \eqref{34}, note that Hom-flexibility implies that:
\[
\begin{split}
0 &= as_A(\alpha(x),x^2,\alpha(x))\\
&= (\alpha(x)x^2)\alpha^2(x) - \alpha^2(x)(x^2\alpha(x)).
\end{split}
\]
Together with the first identity in \eqref{34}, we have:
\begin{equation}
\label{x4}
\begin{split}
x^4 & \defn (x^2\alpha(x))\alpha^2(x) = (\alpha(x)x^2)\alpha^2(x)\\
&= \alpha^2(x)(x^2\alpha(x)) = \alpha^2(x)(\alpha(x)x^2).
\end{split}
\end{equation}
Using multiplicativity and \eqref{x4}, the defining identity \eqref{admissibility} applied to $(\alpha(x),\alpha(x),x^2)$ says that:
\[
\begin{split}
0 &= 3as_A(\alpha(x),\alpha(x),x^2) - (\alpha(x)x^2)\alpha^2(x) + (x^2\alpha(x))\alpha^2(x)\\
&\relphantom{} - (\alpha(x)x^2)\alpha^2(x) + (\alpha(x)\alpha(x))\alpha(x^2)\\
&= 3\left\{(\alpha(x)\alpha(x))\alpha(x^2) - \alpha^2(x)(\alpha(x)x^2)\right\} - x^4 + (\alpha(x)\alpha(x))\alpha(x^2)\\
&= 4\alpha(x^2)\alpha(x^2) - 4x^4.
\end{split}
\]
We have proved the second identity in \eqref{34}.
\end{proof}


\end{document}